\numberwithin{equation}{section}
\theoremstyle{plain}
\newtheorem{theorem}{Theorem}[section]
\newtheorem{proposition}[theorem]{Proposition}
\newtheorem{claim}[theorem]{Claim}
\newtheorem{example}[theorem]{Example}
\newtheorem{definition}[theorem]{Definition}
\newtheorem{fact}[theorem]{Fact}
\newtheorem{remark}[theorem]{Remark}
\newcommand{\bfC}{{\mathbb C}}
\newcommand{\bfP}{{\mathbb P}}
\newcommand{\bfR}{{\mathbb R}}
\newcommand{\bfZ}{{\mathbb Z}}
\newcommand{\bfQ}{{\mathbb Q}}
\newcommand{\barg}{{\overline g}}
\newcommand{\barj}{{\overline j}}
\newcommand{\barz}{{\overline z}}
\newcommand{\barpartial}{{\overline \partial}}
\newcommand{\txi}{{\widetilde \xi}}
\newcommand{\mapright}[1]{\smash{\mathop{   \hbox to 0.7cm{\rightarrowfill}}
  \limits^{#1}}}
\def\bp{\overline{\partial}}
\def\p{\partial}
\def\bp{\overline{\partial}}
\def\vph{\varphi}
\begin{document}

\title{Irregular Eguchi-Hanson type metrics and their soliton analogues}
\author{Akito Futaki}
\address{Yau Mathematical Sciences Center, Tsinghua University, Haidian district, Beijing 100084, China}
\email{futaki@tsinghua.edu.cn}
\date{January 31, 2021 }

\begin{abstract}
We verify the extension to the zero section of momentum construction of 
K\"ahler-Einstein metrics and K\"ahler-Ricci solitons on the total space $Y$ of positive rational powers of the 
canonical line bundle of toric Fano manifolds with possibly irregular Sasaki-Einstein metrics.
More precisely, we show that the extended metric along the zero section has an expression which 
can be extended to $Y$, 
restricts to the associated unit circle bundle as a transversely K\"ahler-Einstein 
(Sasakian eta-Einstein) metric scaled in the Reeb flow direction, and that there is a Riemannian submersion from the scaled Sasakian eta-Einstein metric to the
induced metric of the zero section.
\end{abstract}

\keywords{Eguchi-Hanson metric, Ricci-flat K\"ahler metric, toric Fano manifold, Sasaki-Einstein manifold}

\subjclass{Primary 53C55, Secondary 53C21}

\maketitle

\section{Introduction}
The Eguchi-Hanson metric (\cite{eguchi-hanson}, 1979) is a complete Ricci-flat K\"ahler metric on the canonical line bundle
of $\bfC\bfP^1$, also expressed as a gravitational instanton. Its holonomy group is $SU(2) = Sp(1)$, and this gives a hyperk\"ahler
structure. Around the same period Calabi (\cite{calabi79}, 1979) constructed independently a hyperk\"ahler metric on the cotangent bundle of $\bfC\bfP^m$
for $m \ge 1$. Calabi's method was to reduce obtaining a K\"ahler potential with good properties to an ordinary differential equation
when there is a large group of symmetries. This method, now called the Calabi ansatz, was applied later in many ways by many other mathematicians,
typically the momentum construction of Hwang-Singer \cite{Hwang-Singer02} 
and Feldman-Ilmanen-Knopf \cite{FIK}. 
In our papers \cite{Fut2007}, \cite{Futaki11Tohoku},  \cite{FutakiWang11}, we took up the works of Hwang-Singer
and Feldman-Ilmanen-Knopf 
to combine their ideas with the existence of Sasaki-Einstein metrics on toric Sasakian manifolds \cite{FOW}. 
Among other things we tried to show the existence of a complete Ricci-flat K\"ahler metric on the canonical line bundle $K_M$ 
of a toric Fano manifold $M$ in \cite{Fut2007}, \cite{Futaki11Tohoku}, and complete K\"ahler-Ricci solitons on some positive rational powers
of $K_M$ in \cite{FutakiWang11}. However we left open the issue of extension to the zero section of the line bundles 
when the Reeb vector field is irregular.
In this paper we verify the extension to the zero section of the momentum construction of 
K\"ahler-Einstein metrics and K\"ahler-Ricci solitons on the total space of some positive rational powers of 
canonical line bundle of toric Fano manifolds with possibly irregular Sasaki-Einstein metrics. Our results are described as follows.

\begin{theorem}\label{main2} Let $M$ be a toric Fano manifold and 
$L$ a holomorphic line bundle over $M$ such that $K_M = L^{\otimes p}$ for some positive integer $p$. 
Then for an integer $k \ge p$, there exists
a complete K\"ahler-Einstein metric $\omega_\varphi$ 
on the total space $Y$ of $L^{\otimes k}$ (denoted by $L^k$ hereafter) with $\rho_{\omega_\varphi} = (\frac{2p}k - 2)\omega_\varphi$, 
where $\rho_{\omega_\varphi}$ is the Ricci form of $\omega_\varphi$ and $\varphi$ is the profile of the momentum construction starting from the K\"ahler cone metric $\omega$
of a possibly irregular Sasakian $\eta$-Einstein (transversely K\"ahler-Einstein) metric 
on the associated unit circle bundle of $L^k$ (see Section 3 for the profile). 
The resulting metric along the zero section has an expression which 
 can be extended to $Y$, 
restricts to the associated unit circle bundle as a transversely K\"ahler-Einstein 
(Sasakian $\eta$-Einstein) metric scaled by a constant given by \eqref{C139} in the Reeb flow direction, and the 
induced metric of the zero section has a submersion from this scaled Sasakian $\eta$-Einstein metric. 

In particular, this construction gives a complete Calabi-Yau metric on the total space of $K_M$, with k=p in this case.
\end{theorem}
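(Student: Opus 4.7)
The plan is to follow the momentum-construction strategy of Hwang--Singer and Feldman--Ilmanen--Knopf, starting from the Sasaki--Einstein structure on the associated unit circle bundle produced by \cite{FOW}, and then to analyse the resulting metric near the zero section where the Reeb field may be irregular. The Einstein equation $\rho_{\omega_\varphi}=(\tfrac{2p}{k}-2)\omega_\varphi$ will be converted to a single ODE for the profile $\varphi$, and the core of the argument is then to read off from the leading behaviour of the solution at the origin that the metric extends across the zero section.

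First I would fix the initial data: on $Y\setminus\{0\}$, identified with the Kähler cone $C(X)$ over the unit circle bundle $X$ of $L^k$, take the Sasaki--Einstein cone metric $\omega=\tfrac12 dd^c r^2$, where $r$ is the radius function determined by a Hermitian metric on $L^k$ whose curvature realises the transverse Kähler--Einstein form. Writing $t=\log r^2$, the momentum construction replaces $\omega$ by $\omega_\varphi=\tfrac12 dd^c \varphi(t)$ (in the sense of the paper's Section~3), and a direct computation expresses the Ricci form of $\omega_\varphi$ in terms of $\varphi'$, $\varphi''$, the transverse Ricci form (which equals $\tfrac{2p}{k}$ times the transverse Kähler form because $K_M=L^{\otimes p}$ and we work on $L^k$), and the curvature of $L^k$. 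Imposing the Einstein condition collapses this to an ODE whose Legendre transform in $t$ yields an equation for $\varphi'$ as a function of the momentum variable, exactly of the type treated in \cite{Fut2007,Futaki11Tohoku,FutakiWang11}.

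Next I would solve the ODE with the boundary values that make the metric complete at infinity and regular at $r=0$. Prescribing $\varphi'(t)\to 0$ as $t\to -\infty$ with the correct linear rate in $e^t$ gives a candidate for extension to the zero section, while the condition at $t\to\infty$ produces completeness; the constraint $k\ge p$ enters precisely to guarantee positivity of $\varphi''$ and of the effective coefficients throughout, so that $\omega_\varphi$ is an honest Kähler metric on the open set $Y\setminus\{0\}$. This part is parallel to the regular case and I would quote the existence and positivity analysis from the earlier papers with the obvious modifications for the exponent $\tfrac{2p}{k}-2$.

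The main obstacle, and the actual new content, is the extension across the zero section when the Reeb vector field on $X$ is irregular: the zero section is no longer a smooth quotient of $X$ by a circle action. My strategy would be to expand $\varphi$ near $t=-\infty$ as $\varphi(t)=c_0+c_1 e^t+O(e^{2t})$ with $c_1$ determined by the ODE and by \eqref{C139}, then substitute this expansion back into $\omega_\varphi=\tfrac12 dd^c\varphi(\log r^2)$ and rewrite the resulting tensor purely in terms of objects defined on $Y$ (the fibre coordinate on $L^k$ and pullbacks from $M$), without ever invoking a global $S^1$-quotient. Concretely, the $\tfrac12 dd^c(c_0+c_1 r^2)$ model metric is manifestly smooth on $Y$, and the remainder is $O(r^2)$ smaller; writing everything via the transverse Kähler form on $X$ and the connection one-form, one sees that the sum reorganises into a tensor that pulls back from the ambient Kähler manifold $Y$. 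Restricting the leading part to the level set $\{r=1\}=X$ rescales the Reeb direction by the constant coming from $c_1$ (this is \eqref{C139}), while the horizontal part recovers the transverse Kähler--Einstein metric; and the further restriction to $r=0$ shows that the projection from $(X,\text{scaled metric})$ to the zero section of $Y$ kills exactly the scaled Reeb direction and preserves the transverse metric, i.e.\ is a Riemannian submersion. The hardest step will be the bookkeeping of this expansion in a way that is intrinsic on $Y$ rather than on $X$, since the absence of a global $S^1$-action in the irregular setting forbids the usual orbifold-quotient shortcut; the Calabi-Yau statement for $K_M$ follows as the special case $k=p$, in which the Einstein constant $\tfrac{2p}{k}-2$ vanishes.
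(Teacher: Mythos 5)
Your overall route---momentum construction seeded by the Sasaki--Einstein structure of \cite{FOW}, reduction of the Einstein equation to an ODE for the profile, and extraction of the leading behaviour of the potential in powers of $r^2$ to control the metric near the zero section---is the same as the paper's, and your expansion $c_0+c_1r^2+O(r^4)$ is exactly what the paper extracts (your $c_1$ corresponds to the constant $e^{c_0}$ of \eqref{C139}, which the paper obtains from $\tau=r^2e^{c_0-\gamma(\tau)}$ using real-analyticity of the profile). But there is a genuine gap in your initial normalization. You assert that the transverse Ricci form of the \cite{FOW} Sasaki--Einstein metric equals $\tfrac{2p}{k}$ times the transverse K\"ahler form ``because $K_M=L^{\otimes p}$.'' That is false: by Fact~\ref{s3}(c) a Sasaki--Einstein metric has transverse Einstein constant $2m+2$, and the constant $\tfrac{2p}{k}$ only appears at the level of basic cohomology for the regular normalization. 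This matters because the extendability condition $\varphi'(0)=2$ forces $\alpha=\kappa-2$ in \eqref{P3}; starting from $\kappa=2m+2$ would force $\alpha=2m>0$, and the explicit solution \eqref{k4} is then negative for large $\tau$, so the construction collapses (consistently with the nonexistence of complete noncompact Einstein metrics of positive Einstein constant). The paper repairs this with two $D$-homothetic transformations (regular structure $\to$ constant $2m+2$ $\to$ apply \cite{FOW} $\to$ back to constant $\tfrac{2p}{k}$), so that the momentum construction is seeded by an $\eta$-Einstein metric, not the Sasaki--Einstein metric itself. You need this step.

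A second, smaller gap: your statement that the radius function of the possibly irregular structure ``is determined by a Hermitian metric on $L^k$'' is precisely the $S^1$-bundle-adapted property \eqref{r}, namely $r^2=r_0^2\exp(\pi^*\psi)$ with $\psi$ a function on the base $M$. It is what lets the limiting $2$-form $\omega^T+2e^{c_0}\,i\,\partial r\wedge\bar\partial r$ be rewritten as $\tfrac{i}{2}\partial\bar\partial(\log h+\psi)+e^{c_0}\tfrac{i}{2}h\,dz\wedge d\bar z$, an expression defined on all of $Y$. It holds because the volume-minimizing deformation is $T^{m+1}$-invariant and the bundle circle sits inside $T^{m+1}$, so the conformal factor descends to $M$---but this must be stated and used, since otherwise $i\,\partial r\wedge\bar\partial r$ need not extend over the zero section. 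Finally, note that your model $\tfrac12 dd^c(c_0+c_1r^2)$ is a multiple of the cone form $\tfrac{i}{2}\partial\bar\partial r^2=2i\,\partial r\wedge\bar\partial r+r^2\,dd^c\log r$, which is $O(r^2)$ in the horizontal directions; the horizontal positivity of the limit comes from the separate $\omega^T$ summand in $\omega_\varphi=(1+\tau)\omega^T+\varphi(\tau)\,i\,\partial t\wedge\bar\partial t$, which your expansion of the potential omits even though you invoke it verbally afterwards.
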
 

Note that the associated circle bundle in the statement of Theorem \ref{main2} is strictly speaking the set $\{r=1\}$ 
in the K\"ahler cone of the Sasakian $\eta$-Einstein manifold where $r$ is the radial function.

\begin{remark}\label{rem1} The cohomology class $[\omega_{\varphi}]$ corresponds to the $\mathbb Q$-line bundle $\pi^\ast K_M^{-\frac{k}{2p}} = \pi^\ast L^{-\frac{k}2}$ under $\mathrm{Pic}(Y) = H^2(Y,\mathbb Z)$ (c.f. \cite{vanC11}), where we have put $Y$ to be the
total space of $L^{k}$ and $\pi : Y \to M$ is the projection. But this class belongs to the image of $H^2_c(Y,\bfZ) \hookrightarrow H^2(Y,\bfZ)$
so that it has a compact support.
\end{remark}

As indicated in the statement, Theorem \ref{main2} uses the momentum construction of Hwang-Singer 
\cite{Hwang-Singer02}. In the meantime after \cite{Fut2007}, the existence of Calabi-Yau metrics
on crepant resolutions of Calabi-Yau cones have been obtained by
\cite{vanC10}, \cite{vanC11}, \cite{Goto}, \cite{ConlonHein13}, \cite{ConlonHein15} by
the method of the seminal work by Joyce \cite{Joyce}, and by Biquard and Macbeth \cite{BiquardMacbeth} by the 
gluing method. 
 Their results imply that a Calabi-Yau metric exists for each K\"ahler class in the 2nd cohomology class.
 
 \begin{remark}\label{rem2}
 The Calabi-Yau metric in the case of $k=p$ in Theorem \ref{main2} is asymptotic with order $-2m$ to 
 the Calabi-Yau cone metric
 corresponding to the Sasaki-Einstein metric, where $m = \dim_\bfC M$. Thus by the uniqueness result in \cite{ConlonDeruelleSun19},
the constructions in \cite{vanC10} and \cite{Goto} recover  our Calabi-Yau metric in Theorem \ref{main2}.
 \end{remark}
 
 The K\"ahler-Einstein metrics obtained in Theorem \ref{main2} have explicit descriptions near the zero section.
 In particular, in the Calabi-Yau case, this is an extra degree of information that one obtains as a result of solving an ODE
 rather than a PDE as was obtained in \cite{Goto, vanC10, vanC11}, where information concerning the Calabi-Yau metrics in a neighborhood of the zero section is lost.

\begin{theorem}\label{main4} 
Let $M$ be a toric Fano manifold and $K_M = L^{p}$, $p \in \bfZ^+$, then 
 the total space of $L^{k}$ admits a complete expanding K\"ahler-Ricci soliton
 if $k > p$, a complete steady K\"ahler-Ricci soliton if k = p and a complete shrinking K\"ahler-Ricci soliton if $k<p$.
 These solitons are obtained by the momentum construction, 
 and the resulting metric along the zero section has an expression similar to Theorem \ref{main2}.
\end{theorem}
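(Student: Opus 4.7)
The plan is to follow exactly the same strategy as in Theorem \ref{main2}: fix a possibly irregular Sasakian $\eta$-Einstein structure on the unit circle bundle inside $L^{k}$, form the associated K\"ahler cone metric $\omega$, and seek a K\"ahler-Ricci soliton on $Y$ of the momentum form $\omega_\varphi = \omega + i\partial\bp\varphi(s)$, with $s$ the Hwang-Singer momentum variable. First I would fix the soliton vector field $V$ to be the fibrewise scaling holomorphic vector field on $Y$ and write down the soliton equation
\[
\rho_{\omega_\varphi} - \lambda\,\omega_\varphi = \mathcal L_V \omega_\varphi,
\]
with $\lambda$ of sign opposite to $k-p$, so that $k>p$ corresponds to an expander, $k=p$ to a steady soliton, and $k<p$ to a shrinker. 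As in \cite{FutakiWang11}, substituting the momentum ansatz collapses this equation into a second-order ODE for the profile $\varphi(s)$ on $(0,\infty)$, in which the Sasakian data enter only through a single constant determined by the transverse K\"ahler-Einstein equation.

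Step two is to exhibit a solution of this ODE with the correct boundary behaviour: at $s=0$ the values of $\varphi$ and its first derivative should match those needed for a smooth extension of $\omega_\varphi$ across the zero section, while at $s\to\infty$ the profile should grow at the rate producing a complete metric. The three regimes are handled by the existence and asymptotic analysis of \cite{FutakiWang11}, whose monotonicity arguments distinguish the expander, steady, and shrinker cases by the sign of $\lambda$; this step is essentially present in the literature and only needs the mild adaptation that the Sasaki link is now allowed to be irregular.

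Step three, where the statement goes beyond \cite{FutakiWang11}, is the extension to the zero section. Here I would import the argument of Theorem \ref{main2} verbatim: rewrite $\omega_\varphi$ near $s=0$ as the sum of a scalar multiple of the transverse K\"ahler-Einstein metric on $M$ and a fibre term, and check that the limiting value of $\varphi$ at $s=0$ together with the soliton analogue of the normalisation constant \eqref{C139} (the Einstein constant being replaced by its soliton counterpart) assembles these pieces into a smooth K\"ahler tensor on $Y$ along the zero section, and simultaneously yields the claimed Riemannian submersion from the scaled Sasakian $\eta$-Einstein metric on the unit circle bundle to the induced metric on $M$.

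The main obstacle is the same one left open in \cite{FutakiWang11}, namely the compatibility of the momentum profile with the possibly irregular Sasakian $\eta$-Einstein structure at $s=0$, so that the resulting tensor descends smoothly across the zero section. Since the soliton ODE has the same leading behaviour at $s=0$ as the Einstein ODE and differs only by lower-order terms proportional to $\lambda$ and to the soliton field $V$, the smoothness check already performed for Theorem \ref{main2} transfers with essentially no change; the only genuine computation is to track how \eqref{C139} is modified by the soliton contribution in each of the three regimes, and to verify that completeness at infinity survives this modification, both of which follow from the asymptotic analysis of $\varphi$.
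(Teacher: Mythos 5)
Your proposal follows essentially the same route as the paper: reduce the soliton equation via the momentum construction to an ODE for the profile (forced by the requirement that the soliton potential be $\mu\tau + c$), verify positivity, quadratic-or-slower growth, and the boundary conditions $\varphi(0)=0$, $\varphi'(0)=2$, and then transplant the zero-section extension argument of Proposition \ref{C30} verbatim, which is exactly what the paper does. The only detail you gloss over is the shrinking case $k<p$, where one must tune $\mu>0$ so that the coefficient of the exponentially growing term $e^{\mu\tau}(1+\tau)^{-m}$ in the explicit solution vanishes (the Feldman--Ilmanen--Knopf trick, Claim \ref{claim4}); this is the step that makes linear growth, hence completeness and the extension, go through.
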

In this soliton case we also use Hwang-Singer's momentum construction while existence results using PDE methods
have been known by \cite{Sie13}, \cite{ConlonDeruelle20JDG} for expanding solitons and by \cite{ConlonDeruelle20}
for steady solitons. But the existence of complete shrinking solitons obtained in Theorem \ref{main4} is new. 
A notion of stability for such metrics was alluded to but not defined in \cite{ConlonDeruelleSun19}. The Calabi-Yau cones
coming from line bundles as in Theorem \ref{main4} should be stable whatever the definition of stability may be. 
For uniqueness there are works by \cite{ConlonDeruelleSun19}, \cite{Sch20} and \cite{Cifarelli}. 

The difference of the arguments between our earlier works \cite{Fut2007}, \cite{Futaki11Tohoku},  \cite{FutakiWang11}
and the present paper is as follows. In the earlier papers we tried to describe the metric near the zero section using
the complex coordinate along the complexified Reeb flow, and since it is irregular in general it appeared difficult
to use it for the description. In the present paper we describe the behavior of the metric on the level set of the radial
coordinate $r$ of the K\"ahler cone metric associated with the Sasakian $\eta$-Einstein manifold before starting the momentum construction, 
along the flow of the radial vector field $r\partial/\partial r$, and observe the metric converges as $r \to 0$ to a 
metric along the zero section. The description of this limiting metric along the zero section has a form which can be considered as defining
a metric extending over $L^k$ minus the zero section, and if we restrict this metric on the set $\{r=1\}$ it coincides with
the Sasakian $\eta$-Einstein metric
scaled by a positive constant in the (real) Reeb flow direction. The induced metric on the zero section has a 
submersion from this scaled Sasakian $\eta$-Einstein metric. 

After this introduction, in Section 2, we recall basic facts about Sasakian geometry, the volume minimization arguments 
of Martelli-Sparks-Yau \cite{MSY1}, \cite{MSY2} and the existence of Sasaki-Einstein metrics on toric Sasaki manifolds \cite{FOW}.
In Section 3 we set up the 
momentum construction for complete K\"ahler-Einstein metrics and prove Theorem \ref{main2}, Remark \ref{rem1} 
and Remark \ref{rem2}.
In Section 4 we study the soliton case and prove Theorem \ref{main4}. 

\section{Preliminaries on Sasakian geometry.} 
In this section we briefly review Sasakian geometry. The reader is referred to \cite{BGbook}, \cite{MSY2}, \cite{CFO}, \cite{futaki10} and
\cite{FO_ICCM_Notices19} for more detail and related topics.
A Sasakian manifold is by definition a Riemannian manifold $(S, g)$ whose Riemannian cone manifold 
$(C(S), \overline g)$ with $C(S) \cong  S\times \bfR^+$ and $\overline g = dr^2  + r^2g$ 
is a K\"ahler manifold, where $r$ is the standard coordinate on $\bfR^+$.
But it is important to note that $r$ is a smooth function on $C(S)$ through the identification $C(S) \cong  S\times \bfR^+$,
and a deformation of Sasakian structure is given by the deformation of the smooth function $r$ on $C(S)$.

From this definition, a Sasakian manifold $S$ has odd-dimension $\dim _{\bfR}S = 2m + 1$, and thus
$\dim_{\bfC} C(S) = m+1$.
$S$ is always identified with the
real hypersurface $\{r=1\}$ in $C(S)$ and inherits the Riemannian submanifold structure and other various structures
from $C(S)$ as described below.
Algebraically, $C(S)$ is a normal affine algebraic variety (see \cite{vanC10}, Section 3.1), and the apex of the cone is the origin which is the unique singularity.

To study the differential geometry of a Sasakian manifold, it is also important to notice that the K\"ahler form
$\overline{\omega}$ 
of the cone $C(S)$ is expressed using the radial function $r$ as
\begin{equation*}\label{K1}
\overline{\omega} = \frac{i}2 \partial\barpartial r^2.
\end{equation*}
Thus the geometry of the K\"ahler cone is determined only from $r$ and the complex structure, denoted by $J$.
Hence, with fixed $J$, the Sasakian structure is also determined by the smooth function $r$.
As noted in the first paragraph of this section, the Sasakian structure is deformed by the deformation of 
the smooth function $r$ satisfying $i \partial\barpartial r^2 > 0$. 

Since $S$ is identified with the submanifold $\{r=1\}$, the Sasakian geometry of $S$ as submanifold geometry is described only by using $r$ and the complex structure $J$.
One can convince oneself of this fact and the facts described below if one examines the standard example of the unit sphere $\{r=1\}$ in $\bfC^{m+1}$
with
$$r^2 = (|z_0|^2 + \cdots + |z_m|^2).$$
Putting $\txi = J(r\frac{\p}{\partial r})$, 
$\txi - iJ\txi$ defines a holomorphic vector field on $C(S)$. 
The restriction of $\txi$ to 
$S = \{r=1\}$, which is tangent to $S$, is called the {\it Reeb vector field} of $S$ and
denoted by $\xi$. 
The Reeb vector field $\xi$ is a Killing vector field on $S$ and 
generates a $1$-dimensional foliation $\mathcal F_{\xi}$, called the Reeb
foliation of $S$. It is also possible to consider the group of isometries generated by the flow of $\xi$, called the Reeb flow
which we denote also by $\mathcal F_{\xi}$. 
The closure of the Reeb flow is a toral subgroup 
in the isometry group of $S$. 
If the dimension of the toral group is equal to (resp. greater than) $1$ the Sasakian manifold is said to be quasi-regular (resp. irregular),
and if the Sasakian manifold is quasi-regular and the $S^1$-action is free it is said to be regular.
In the classical Sasakian geometry, the standard normalization of the metric
is chosen so that the length of $\xi$ is $1$. Below we see some clumsy coefficients e.g. \eqref{SE1}, \eqref{SE2}, but they come from 
this normalization. We will keep this normalization since it is natural as long as we adopt the above definition of Sasakian manifolds.

Let $\eta$ be the dual $1$-form
to $\xi$ using the Riemannian metric $g$, i.e. $\eta = g(\xi, \cdot)$. 
To describe $\eta$ in terms of $r$, it is convenient to introduce the operator $d^c = (i/2)(\barpartial - \p)$. 
Our choice of the factor $1/2$ is to make the equality $dd^c = i\p\barpartial$ hold. This choice was convenient
in our earlier paper \cite{FOW}, and we will continue to use it in this paper.
Then $\eta$ can be expressed as
$$\eta = (i(\barpartial - \p) \log r)|_{r=1} =  (2d^c \log r) |_{r=1}.$$
Then $d\eta$ is non-degenerate on $D := \mathrm{Ker}\, \eta$ and thus $S$ becomes a contact manifold
with the contact form $\eta$. $D$ is called the {\it contact bundle}, which is a smooth complex vector bundle over $S$
and has the first Chern class $c_1(D) \in H^2(S,\bfZ)$.
The Reeb vector field $\xi$ satisfies
$$i(\xi)\eta= 1\quad \mathrm{and}\quad i(\xi) d\eta = 0,$$
where $i(\xi)$ denotes the interior product, which are often used as the defining
properties of the Reeb vector field for contact manifolds.
The local orbit spaces of $\mathcal F_{\xi}$ admits 
a well-defined K\"ahler structure, and the pull-back of the local K\"ahler forms to $S$ 
glue together to give a global $2$-form 
$$\omega^T = \frac12 d\eta = d(d^c \log r\,|_{r=1}) = (dd^c \log r)|_{r=1}$$
on $S$, which we call the {\it transverse K\"ahler form}. 
We call the collection of K\"ahler structures on local leaf spaces of $\mathcal F_{\xi}$
the {\it transverse K\"ahler structure}. 
A smooth differential form $\alpha$ on $S$ is said to be basic if
$$ i(\xi)\alpha = 0\quad \mathrm{and}\quad \mathcal L_{\xi}\alpha = 0,$$
where $\mathcal L_{\xi}$ denotes the Lie derivative by $\xi$.
For example, the transverse K\"ahler form $\omega^T$ is a basic 2-form.
The basic forms are lifted from differential forms on local orbit spaces of $\mathcal F_\xi$, and 
preserved by the exterior derivative $d$ which decomposes into
$d = \p_B + \barpartial_B$. We can define basic cohomology groups using $d$ and
basic Dolbeault cohomology groups using $\barpartial_B$. We also have the transverse Chern-Weil theory and
can define basic Chern classes for complex vector bundles with basic transition functions.
As in the K\"ahler case, the basic first Chern class $c_1^B$
of the Reeb foliation is represented by the $1/2\pi$ times the transverse
Ricci form $\rho^T$:
\begin{equation*}
\rho^T = -i \p_B\bp_B \log \det(g^T_{i\barj}),
\end{equation*} 
where 
$$\omega^T = i \ g_{i\barj}^T\ dz^i \wedge dz^{\barj}$$
and $z^1,\ \cdots,\ z^m$ are local holomorphic coordinates on the
local orbit space of $\mathcal F_\xi$.
A Sasakian manifold $(S,g)$ is called a Sasaki-Einstein manifold if $g$ is an Einstein metric.
\begin{fact}[c.f.  \cite{BGbook}]\label{s3}Let $(S,g)$ be a $(2m+1)$-dimensional Sasakian manifold. 
The following three conditions are equivalent.
\begin{enumerate}
\item[(a)] $(S,g)$ is a Sasaki-Einstein manifold. The Einstein constant is necessarily $2m$: 
\begin{equation}\label{SE1}
\mathrm{Ric}_g = 2m g, 
\end{equation}
where $ \mathrm{Ric}_g$ denotes the Ricci curvature of $g$.
\item[(b)] $(C(S), \overline g)$ is a Ricci-flat K\"ahler manifold.
\item[(c)] The local orbit spaces of the Reeb flow have transverse K\"ahler-Einstein metrics
with Einstein constant $2m+2$:
\begin{equation}\label{SE2}
\rho^T = (2m+2) \omega^T.
\end{equation}
\end{enumerate}
\end{fact}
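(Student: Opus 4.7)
The plan is to establish the two-way equivalences (a)$\Leftrightarrow$(b) and (a)$\Leftrightarrow$(c) separately, in each case by comparing Ricci tensors through the explicit structural relations between $g$, the cone metric $\overline g$ on $C(S) = S \times \bfR^+$, and the transverse metric $g^T$ on local leaf spaces of $\mathcal F_\xi$.

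For (a)$\Leftrightarrow$(b), I would apply the standard warped-product Ricci formula to $\overline g = dr^2 + r^2 g$, regarded as a warped product over $\bfR^+$ with warping function $f(r)=r$ and base $S$ of dimension $n = 2m+1$. With $f' = 1$ and $f'' = 0$, this formula gives
\begin{align*}
\overline{\mathrm{Ric}}(\p_r,\p_r) &= -n f''/f = 0,\\
\overline{\mathrm{Ric}}(\p_r, X) &= 0,\\
\overline{\mathrm{Ric}}(X,Y) &= \mathrm{Ric}_g(X,Y) - \bigl((n-1)(f')^2 + f f''\bigr) g(X,Y) = \mathrm{Ric}_g(X,Y) - 2m\,g(X,Y),
\end{align*}
for $X,Y$ tangent to $S$. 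Hence $\overline g$ is Ricci-flat if and only if $\mathrm{Ric}_g = 2m g$, proving (a)$\Leftrightarrow$(b) and also pinning down the Einstein constant $2m$.

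For (a)$\Leftrightarrow$(c), I would view the Reeb foliation locally as a Riemannian submersion $\pi : U \subset S \to U/\mathcal F_\xi$ with one-dimensional totally geodesic fibers generated by the unit Killing field $\xi$, so that $g^T$ is literally the induced metric on the local leaf space. Combining the O'Neill formulas for such a submersion with the Sasakian identity $\nabla_X \xi = -\Phi X$, one extracts the transverse Ricci identity
\[
\mathrm{Ric}^T(X,Y) = \mathrm{Ric}_g(X,Y) + 2\,g(X,Y), \qquad X,Y \in D = \mathrm{Ker}\,\eta,
\]
together with the universal identities $\mathrm{Ric}_g(\xi,\xi) = 2m$ and $\mathrm{Ric}_g(\xi, X) = 0$ for $X \in D$, which hold on any Sasakian manifold and can also be read off from the cone formula of the preceding paragraph. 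These identities together imply that $\mathrm{Ric}_g = 2m g$ on $S$ is equivalent to $\mathrm{Ric}^T = (2m+2) g^T$ on local leaf spaces.

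The main technical obstacle is the transverse Ricci identity on $D$, where the $+2g$ shift arises from the O'Neill $A$-tensor measuring the non-integrability of the contact distribution together with the contribution of the unit-length Reeb fibers; this has to be tracked carefully against the normalization $|\xi|=1$ that produces the somewhat awkward constants $2m$ and $2m+2$. Once that identity and the universal Ricci identities along $\xi$ are in place via $\nabla \xi = -\Phi$ and the integrability of $J$ on $C(S)$, the three equivalences reduce to a direct comparison. Since the fact is classical, I would cite \cite{BGbook} for the detailed tensor bookkeeping rather than reproduce it in full.
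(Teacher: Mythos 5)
Your proposal is correct and follows the standard route that the paper itself implicitly relies on: the paper states this as a Fact with a citation to \cite{BGbook} and gives no proof, but the key ingredient of your (a)$\Leftrightarrow$(c) argument is exactly the identity $\mathrm{Ric}_{g} = \mathrm{Ric}^{T} - 2g^{T} + 2m\,\eta\otimes\eta$ recorded in \eqref{s10} (there attributed to ``elementary computations in Sasakian geometry''), and your warped-product computation for (a)$\Leftrightarrow$(b) is the standard cone argument with the correct constant $(n-1)=2m$. Deferring the O'Neill-tensor bookkeeping for the $+2g$ shift to \cite{BGbook} is consistent with how the paper treats this statement.
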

\noindent
One may compare Fact \ref{s3} with \eqref{s8} - \eqref{s11} below.

In the previous paragraph we defined the contact form $\eta$ and the transverse K\"ahler form $\omega$ on $S$.
For the purpose of momentum construction of this paper, it is more convenient to consider them to be
lifted to the K\"ahler cone $C(S)$ as
\begin{equation}\label{C1}
\eta = 2d^c \log r
\end{equation}
and
\begin{equation}\label{s1}
\omega^T = \frac12 d\eta = dd^c \log r.
\end{equation}
A moment's thought shows the transverse Ricci form $\rho^T$ also lifts to $C(S)$. 
If $S$ is a Sasaki-Einstein manifold then by (c) of Fact \ref{s3}, we have $c_1^B > 0$, i.e. 
$c_1^B$ is represented by a positive basic $(1,1)$-form. Moreover, 
under the natural homomorphism $H^2_B(\mathcal F_{\xi}) \to H^2(S)$ of the basic cohomology group 
$H^2_B(\mathcal F_{\xi})$ to ordinary de Rham cohomology group $H^2(S)$, the basic first Chern class $c_1^B$ is sent
to the ordinary first Chern class $c_1(D)$ of the contact distribution $D$ (see the paragraph before Fact \ref{s3})
since the expressions as de Rham classes are the same by the Chern-Weil theory. But by \eqref{SE2} and \eqref{s1},
\begin{equation*}\label{s5}
c_1(D) = (2m+2)[\omega^T] = (m+1)[d\eta] = 0.
\end{equation*}
(Notice that $\omega^T$ is a positive form as a basic form, which is a transverse K\"ahler form, 
but that $\omega^T = \frac 12 d\eta$ is an exact form as an 
ordinary 2-form on $S$.)
Conversely, if $c_1^B > 0$ and $c_1(D) = 0$, then $c_1^B = \tau [d\eta]$ for some positive
constant $\tau$ (See \cite{BGbook}, Corollary 7.5.26. The proof follows from an exact sequence known in 
the foliation theory (the diagram 7.5.11 in \cite{BGbook}).)

A Sasakian manifold 
$(S, g)$ is said to be toric if the K\"ahler cone manifold $(C(S), \barg)$ is toric, namely if there exists an 
$(m+1)$-dimensional real torus $T^{m+1}$ acting on $(C(S), \overline g)$ effectively as
holomorphic isometries fixing the apex of $C(S)$. Then $T^{m+1}$ preserves $\txi$ because $T^{m+1}$ preserves $r$ and the
complex structure $J$, and so the flow generated by $\txi$.
This latter statement implies that $[\txi, \mathrm{Lie}(T^{m+1})] = 0$. It follows that $\txi \in \mathrm{Lie}(T^{m+1})$
because $T^{m+1}$ is maximal in the isometry group of the link of the cone, and that the closure of the flow is a 
toral subgroup of $T^{m+1}$.
\begin{theorem}[\cite{FOW, CFO}]\label{s7}\ \ \\
(i)  Let $S$ be a compact toric Sasakian manifold with $c_1^B > 0$ and $c_1(D) = 0$. Then $S$ admits a possibly irregular Sasaki-Einstein metric by deforming the 
Sasakian structure by varying the Reeb vector field and then performing a transverse
K\"ahler deformation.\\
(ii) For a compact toric Sasakian manifold $S$, the conditions $c_1^B > 0$ and  $c_1(D)=0$ is equivalent to the $\bfQ$-Gorenstein property of K\"ahler cone $C(S)$.
\end{theorem}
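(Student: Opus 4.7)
The plan is to split the argument along the two parts of the statement. For (i), the strategy is the by-now standard two-step procedure: first fix the correct Reeb vector field inside the Sasaki cone via volume minimization, and then solve a transverse Monge--Amp\`ere equation on the basic K\"ahler class. For (ii), the equivalence is essentially algebro-geometric, translating transverse Chern-class data on $S$ into the triviality of a power of the canonical sheaf on $C(S)$.

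For part (i), I would first observe that since $S$ is toric with $c_1^B>0$ and $c_1(D)=0$, the space of $T^{m+1}$-invariant Reeb vector fields (the Sasaki cone) is parametrized by an open cone in $\mathrm{Lie}(T^{m+1})$, and each such $\xi'$ determines a new polarized Sasakian structure on the same underlying $C(S)$ by rescaling $r$. Following Martelli--Sparks--Yau \cite{MSY1,MSY2}, the Einstein--Hilbert/volume functional on this cone is strictly convex and proper once one normalizes by the $\mathbf{Q}$-Gorenstein data; its unique critical point is characterized by the vanishing of the transverse Futaki invariant. This selects a distinguished, possibly irregular, Reeb vector field $\xi_0$ which is the only candidate compatible with the Sasaki--Einstein equation at the level of $c_1^B = (m+1)[d\eta]$.

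With $\xi_0$ fixed, I would then solve the transverse complex Monge--Amp\`ere equation
\begin{equation*}
(\omega^T + dd^c_B \varphi)^m = e^{h - (2m+2)\varphi}\,(\omega^T)^m
\end{equation*}
for a basic function $\varphi$, where $h$ is the transverse Ricci potential coming from the identity $\rho^T - (2m+2)\omega^T = dd^c_B h$. In the toric setting, basic functions invariant under $T^{m+1}$ correspond to convex functions on the moment image of the transverse polytope, and the transverse MA equation reduces to a real MA equation of Wang--Zhu type. The openness and higher-order estimates are standard; the decisive point is the $C^0$ estimate, which in the toric Fano/Sasaki--Einstein setting is equivalent (by the continuity-method argument of FOW and CFO) to properness of the transverse Mabuchi/Ding functional. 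This is precisely what the volume-minimization of Step~1 buys: at $\xi_0$ the barycenter of the associated polytope sits at the origin, killing the linear obstruction and making the functional proper. This $C^0$ estimate is the main obstacle; once it is in hand, closedness and elliptic regularity complete the existence proof.

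For part (ii), I would argue as follows. If $c_1^B>0$ and $c_1(D)=0$, then by the computation recalled just before the statement, $c_1^B = \tau[d\eta]$ in basic cohomology for some $\tau>0$, which after rescaling $\xi$ may be normalized so that $c_1^B = (m+1)[d\eta]$. This identity lifts to $C(S)$ as the statement that some positive power of the canonical bundle of $C(S)$ admits a nowhere-vanishing transversely holomorphic, homogeneous of weight $m+1$ under the Reeb flow section, i.e. $K_{C(S)}^{\otimes N}$ is holomorphically trivial for some $N\ge 1$: this is the $\mathbf{Q}$-Gorenstein condition on the normal affine variety $C(S)$. Conversely, if $C(S)$ is $\mathbf{Q}$-Gorenstein then such a homogeneous trivializing section of $K_{C(S)}^{\otimes N}$ exists, and computing $c_1^B$ by transverse Chern--Weil applied to the induced transverse Hermitian structure yields a positive multiple of $[d\eta]$, hence simultaneously $c_1^B>0$ and $c_1(D) = N^{-1}\cdot 0 = 0$ in $H^2(S,\bfQ)$. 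The remaining bookkeeping is just tracking the weight under the Reeb flow, which is the standard dictionary between basic Chern classes on $S$ and equivariant data on $C(S)$.
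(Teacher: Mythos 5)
Your treatment of part (i) follows the same route as the paper's outline: volume minimization over the slice $\Xi_\beta$ of the Reeb cone in the sense of Martelli--Sparks--Yau, identification of the critical point with the vanishing of the transverse Futaki obstruction (this is \cite{FOW}, Proposition 8.7; in the toric picture the critical point is where $\beta$ is the barycenter of $P_{\xi'}$ --- note it is $\beta$, not the origin, unless you first translate coordinates), followed by the Wang--Zhu continuity method for the resulting transverse (real) Monge--Amp\`ere equation. Up to that normalization your sketch of (i) is essentially the paper's argument at the same level of detail.

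Part (ii), however, has a genuine gap. From $c_1^B>0$ and $c_1(D)=0$ you correctly deduce $c_1^B=\tau[d\eta]_B$ for some real constant $\tau>0$, but you then assert that this ``lifts to $C(S)$'' as the holomorphic triviality of $K_{C(S)}^{\otimes N}$ for some integer $N\ge 1$. There is no such integer $N$ unless $\tau$ (equivalently the distinguished point $\beta$, i.e.\ $\gamma=-\beta$) is \emph{rational}, and nothing in your argument produces that rationality: your reasoning for (ii) never uses the toric hypothesis and, as written, would apply verbatim to an arbitrary compact Sasakian manifold. The paper is explicit that the content of (ii) is precisely equation (23) of \cite{CFO} together with the rationality of $\gamma=-\beta$. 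In the toric case $\beta$ is the unique solution of the integral linear system $\langle\lambda_j,\beta\rangle=1$ determined by the primitive integral normals $\lambda_j$ of the moment cone (unique because the $\lambda_j$ span $\mathrm{Lie}(T^{m+1})$), hence automatically rational, and the $\bfQ$-Gorenstein property of the affine toric variety $\overline{C(S)}$ is exactly the existence of such a rational $\gamma$ with $\langle\lambda_j,\gamma\rangle=-1$. Without routing the argument through this lattice data, the passage from a real transverse Chern class identity to the $\bfQ$-Cartier property of $K_{C(S)}$ does not close; the same rationality issue also undercuts the converse direction, where you need the semi-invariant trivializing section to have integral weight data compatible with $c_1(D)=0$ as an integral class.
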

\noindent
The proof of Theorem \ref{s7} is outlined as follows.
To prove (i), start with an arbitrary Sasakian structure, we wish to deform the Sasakian structure to obtain a new structure
with vanishing obstruction (as in \cite{futaki83.1}) for the existence of K\"ahler-Einstein metric in terms of the transverse K\"ahler structure. 
The idea is to use the volume minimization of Martelli-Sparks-Yau \cite{MSY1}, \cite{MSY2}. 
There are two approaches to describe this. The first approach is to consider the deformation of
K\"ahler structures on the cone $C(S)$. 
As explained at the beginning of this section, the deformation of K\"ahler cone structures are obtained by 
variations of the radial function $r$, so that a variation of the Sasakian structure is described in the form 
$r^{\prime 2}= r^2 e^\psi$ for a smooth function $\psi$ on $C(S)$ with $i\partial\barpartial\, r^{\prime 2} > 0$. We consider the volume functional
on the space of all Sasakian structures on $S$ with fixed complex structure $J$, or equivalently all K\"ahler cone structures on $C(S)$, by assigning the volume $\mathrm{Vol}(\{r=1\})$ of the Sasakian manifold $\{r=1\}$. The first and the second variation formulas
were given in \cite{MSY2}, Appendix C, and it was shown in \cite{FOW}, Proposition 8.7, in the general setting (including non-toric case) that if the first variation vanishes then the obstruction for the existence of transverse K\"ahler-Einstein metric vanishes. When we consider toric Sasakian manifolds and toric K\"ahler cone manifolds we just consider 
the $T^{m+1}$-invariant radial functions so that $\psi$ above is also $T^{m+1}$-invariant. 

The second approach is specific to the toric case. Note that $\mathrm{Vol}(\{r=1\})$ coincides up to a universal 
constant with the volume of the intersection of
the moment map image of the K\"ahler cone, which we call the moment cone and denote by $\mathcal C$, and a hyperplane $H$ (see below for the exact
expression of $H$). This intersection $\mathcal C \cap H$ is the moment map image (in the sense of contact geometry) of the Sasakian manifold determined by $r$. Since $\mathcal C$ is a convex polyhedral cone, $\mathcal C \cap H$ is a 
convex polyhedral compact set. A salient feature of toric K\"ahler geometry is the existence of a distinguished point $\beta$ in $\mathcal C$ 
such that
$$ \langle\lambda_j, \beta\rangle = 1,$$
where $\lambda_j \in \mathrm{Lie}(T^{m+1})$, $j = 1, \cdots, d$, are the normal vectors to the facets of $\mathcal C$;
$$ \mathcal C = \{y \in (\mathrm{Lie}(T^{m+1}))^\ast\,|\, \langle\lambda_j, y\rangle \ge 0,\ j= 1, \cdots, d\}.$$
(In the notation of \cite{MSY1}, $\beta = -\gamma$.)
A differential geometric derivation of $\beta$ using the standard formalism by Delzant \cite{Delzant} and Guillemin \cite{Guillemin94} can be found in \cite{MSY1}, Section 2, but an algebro-geometric derivation in toric geometry is also known, see e.g. de Borbon-Legendre \cite{deBorbonLegendre}, Section 4.2. Moreover, in Theorem 1.2 of their paper 
\cite{deBorbonLegendre}, the role of $\beta$ is clarified in terms of the cone angles along the divisors corresponding to the boundary facets of $\mathcal C$ when the conditions
in (ii), Theorem \ref{s7} of this paper are not satisfied. Further, in \cite{MSY1}, Section 2, it is shown that the Reeb vector field $\xi \in \mathrm{Lie}(T^{m+1})$ satisfies
$$ \langle \xi, \beta \rangle = m+1.$$
Returning to the volume minimization, we take the deformation space of Reeb vector fields, which is considered as the deformation space of toric Sasakian structures, 
to be the hyperplane $\Xi_\beta$ in the dual cone 
$$\mathcal C^\ast = \{x \in (\mathrm{Lie}(T^{m+1})\,|\, \langle x, y\rangle \ge 0 \text{ for all } y \in \mathcal C\}$$
given by
$$ \Xi_\beta := \{ \xi^\prime \in \mathcal C^\ast\,|\, \langle\xi^\prime, \beta\rangle = m+1\}.$$
Note that $\xi$ is contained in $\Xi_\beta$. 
Consider the volume functional on $\Xi_\beta$ given by 
$$ \mathrm{Vol}(\xi^\prime) := \mathrm{Vol}(P_{\xi^\prime}), $$
where $P_{\xi^\prime} = \{y \in \mathcal C\,|\, \langle\xi^\prime, y\rangle = m+1\}$. 
Note that $P_{\xi^\prime}$ passes through $\beta$ for any $\xi^\prime \in \Xi_\beta$.
The the intersection $\mathcal C \cap H$ is $(1/2(m+1))P_\xi$, see (2.68) in \cite{MSY1} for a proof. 
Then this volume functional on $\Xi_\beta$ turns out to be proper and convex, and thus have a 
unique critical point. The critical point $\xi^\prime$ is exactly when $\beta$ is the barycenter of $P_\xi^\prime$, and 
then one can show using Donaldson's expression of the obstruction (\cite{donaldson02}) that the obstruction vanishes for the Sasakian structure 
corresponding to $\xi^\prime$, see \cite{deBorbonLegendre}
for the detail (including the cone angle case). Then we can solve the transverse Monge-Amp\`ere equation by changing the transverse K\"ahler metric, and get a transverse
K\"ahler-Einstein metric using the analysis of Wang-Zhu \cite{Wang-Zhu04}, also \cite{donaldson0803} is recommended for the estimates. Hence by Fact \ref{s3}, we obtain a Sasaki-Einstein metric. The critical Reeb vector field 
$\xi^\prime$ is possibly irregular. These are the outline of the proof of Theorem \ref{s7}, (i). 

Theorem \ref{s7}, (ii), can be found in \cite{CFO}, Theorem 1.2. It essentially follows from the equation (23) in \cite{CFO} and the rationality of $\gamma = -\beta$.

\begin{remark}\label{rem3}Let $r$ and $r^\prime$ be the radial functions of the Sasakian structures corresponding to $\xi$ and $\xi^\prime$ in the
arguments in the previous paragraph where $\xi^\prime$ is the critical point of the volume functional. Then $(1/(2(m+1))P_\xi$ and 
$(1/(2(m+1))P_{\xi^\prime}$ are the moment map images of $\{r=1\}$ and $\{r^\prime = 1\}$ as described above. 
From this, the description of the deformation of Sasakian structure in terms of the radial functions is given by
\begin{equation}\label{rem4}
r^{\prime 2} = r^2 \exp \psi
\end{equation}
for a $T^{m+1}$-invariant smooth function $\psi$ on $\{r = 1\} \cong S$. 
Indeed, $\exp(\frac12\psi )= \langle\xi^\prime, y\rangle / \langle\xi, y\rangle$, see (2.68) in \cite{MSY1}.
\end{remark}

A Sasakian metric $g$ is said to be {\it $\eta$-Einstein} if there exist constants $\lambda$ and
$\nu$ such that 
\begin{equation}\label{s8}
\mathrm{Ric}_g = \lambda\, g + \nu\, \eta\otimes \eta.
\end{equation}
By elementary computations in Sasakian geometry, 
we always have $\mathrm{Ric}_g(\xi,\xi) = 2m$ on any Sasakian manifolds.
This implies that
\begin{equation}\label{s9}
\lambda + \nu = 2m.
\end{equation}
In particular, $\lambda = 2m$ and $\nu = 0$ for a Sasaki-Einstein metric.

Let $\mathrm{Ric}^T$ denote the Ricci curvature of the local orbit space of $\mathcal{F}_\xi$. Then again elementary
computations show
\begin{equation}\label{s10}
\mathrm{Ric}_{g} = \mathrm{Ric}^{T} - 2g^{T} + 2m \eta \otimes \eta
\end{equation}
and that the condition of being an $\eta$-Einstein metric is equivalent to
\begin{equation}\label{s11}
\mathrm{Ric}^T = (\lambda + 2)g^T.
\end{equation}

Given a Sasakian manifold with the K\"ahler cone metric $\barg = dr^2 + r^2 g$, we transform
the Sasakian structure by deforming $r$ into $r' = r^a$ for a positive constant $a$. 
This transformation is called the $D$-homothetic transformation. Then the new
Sasakian structure has 
\begin{equation}\label{s12}
\eta' = d \log r^a = a\eta, \quad \xi' = \frac 1a \xi,
\end{equation}
\begin{equation*}\label{s13}
g' = a g^T + a\eta \otimes a\eta = ag + (a^2 -a)\eta\otimes\eta.
\end{equation*}
Suppose that $g$ is $\eta$-Einstein with $\mathrm{Ric}_g = \lambda g + \nu \eta \otimes \eta$. 
Since the Ricci curvature of a K\"ahler manifold is invariant under homotheties, 
we have $\mathrm{Ric}^{\prime T} = \mathrm{Ric}^T$. From this and $\mathrm{Ric}_{g'}(\xi',\xi') = 2m$, we have
\begin{eqnarray*}
\mathrm{Ric}_{g'} &=& \mathrm{Ric}^{\prime T} - 2g^{\prime T} + 2m \eta' \otimes \eta' \\
&=& \lambda g^T  + 2g^T - 2a g^T + 2m \eta' \otimes \eta'. \nonumber
\end{eqnarray*}
This shows that $g'$ is $\eta$-Einstein with
\begin{equation*}\label{s15}
\lambda' +2 =  \frac{\lambda + 2 }a.
\end{equation*}
In summary, under the $D$-homothetic transformation of an $\eta$-Einstein metric $g$ with \eqref{s8}
we have
a new $\eta$-Einstein metric $g'$ with
\begin{equation}\label{s16}
\rho^{\prime T} = \rho^T, \quad \omega^{\prime T} = a\omega^T, \quad
\rho^{\prime T} = \left(\lambda' + 2\right) \omega^{\prime T} = \frac{\lambda + 2}a \omega^{\prime T},
\end{equation}
and thus, for any positive constants $\kappa$ and $\kappa'$, a transverse K\"ahler-Einstein metric
with Einstein constant $\kappa$ can be transformed by a $D$-homothetic transformation to a
transverse K\"ahler-Einstein metric with Einstein constant $\kappa'$.
{\it In particular, if we are given a Sasaki-Einstein metric $g$ with $\lambda = 2m$, we may obtain by D-homothetic transformation an $\eta$-Einstein
metric $g'$ with arbitrary $\lambda' + 2 > 0$. Conversely, if we have an $\eta$-Einstein metric with $\lambda + 2 > 0$ then we obtain a Sasaki-Einstein
metric with $\lambda' = 2m$ by $D$-homothetic transformation.}

\section{Momentum construction for Sasakian $\eta$-Einstein manifolds}

Based on the arguments on $D$-homothetic transformation in the previous section we start with a Sasakian $\eta$-Einstein manifold 
$(S, g)$ with 
$\mathrm{Ric}_g = \lambda\,g + \nu\,\eta \otimes \eta$, $\lambda + 2 > 0$, 
and with K\"ahler cone metric on $C(S)$
$$ \barg = dr^2 + r^2g.$$
Let $\omega^T = \frac12 d\eta$ be the transverse
K\"ahler form which gives positive K\"ahler-Einstein metrics on local leaf spaces with
\begin{equation}\label{C0}
 \rho^T = \kappa \omega^T,
 \end{equation}
where we have set 
$$\kappa := \lambda + 2 > 0.$$
Working on $C(S)$, we lift $\eta$ on $S$ to $C(S)$ by \eqref{C1}, 
and use the same notation $\eta$ for the lifted one to $C(S)$. Then $\omega^T$ is also lifted to
$C(S)$ by \eqref{s1}, 
and again use the same notation $\omega^T$ for the lifted one to $C(S)$. The {\it momentum construction} (or {\it Calabi ansatz})
searches for a K\"ahler form on $C(S)$ of the form
\begin{equation}\label{C3}
\omega = \omega^T + i \p\bp\, F(t),
\end{equation}
where $t = \log r$ and $F$ is a smooth function of one variable on $(t_1, t_2) \subset (-\infty, \infty)$.

We set
\begin{equation}\label{C4}
\tau = F'(t),
\end{equation}
\begin{equation}\label{C5}
\varphi(\tau) = F''(t).
\end{equation}
Since we require $\omega$ to be a positive form and
\begin{eqnarray*}
i\p\bp\, F(t) &=& i\,F''(t)\, \p t \wedge \bp t \,+ i\, F'(t)\, \p\bp t \\
&=& i\,\varphi(\tau)\, \p t\wedge \bp t \,+ \tau\, \omega^T. \nonumber
\end{eqnarray*}
then we must have $\varphi(\tau) > 0$ and $1+\tau > 0$.
We  further impose that the image of $F'$ is an open interval $(0, b)$ with $b \le \infty$, i.e.
\begin{equation*}\label{C9}
\lim_{t \to t_1^+} F'(t) = 0, \qquad \lim_{t\to t_2^-} F'(t) = b.
\end{equation*}
It follows from $\varphi(\tau) > 0$ that $F'$ is a diffeomorphism from $(t_1,t_2)$ to $(0,b)$,
so we can consider $F'$ as a coordinate change from $t$ to $\tau$. We will set up an ODE to solve
constant scalar curvature or K\"ahler-Ricci soliton equations in terms of $\varphi(\tau)$ with the new coordinate $\tau$. 
In \cite{Hwang-Singer02}, $\varphi(\tau)$ is called the {\it profile} of the momentum construction (\ref{C3}).
In the regular Sasakian case, the Reeb vector field generates a $\bfC^\ast$-action. Then $F$ is a K\"ahler 
potential along the $\bfC^\ast$-orbits and $F^\prime$ is the moment map for the $S^1$-action.

Given a positive function $\varphi > 0$ on $(0,b)$ 
such that 
\begin{equation*}\label{C11}
\lim_{\tau \to 0^+} \int_{\tau_0}^{\tau} \frac{dx}{\varphi(x)} = t_1, \qquad
\lim_{\tau \to b^-} \int_{\tau_0}^{\tau} \frac{dx}{\varphi(x)} = t_2
\end{equation*}
we can recover the momentum construction
as follows. 
Fix $\tau_0 \in (0,b)$ arbitrarily, and introduce a function $\tau(t)$ by
\begin{equation}\label{C8}
t = \int_{\tau_0}^{\tau(t)} \frac{dx}{\varphi(x)},
\end{equation}
and then $F(t)$ by
\begin{equation*}\label{C10}
F(t) = \int_{\tau_0}^{\tau(t)} \frac{xdx}{\varphi(x)}.
\end{equation*}
Then $F$ and $\varphi$ satisfy \eqref{C4} and \eqref{C5}, and thus
\begin{eqnarray}\label{C13}
\omega_{\varphi} &:=& \omega^T + dd^c\,F(t) \nonumber\\
&=& (1+ \tau)\, \omega^T + \varphi(\tau)\,i \p t\wedge \bp t  \\
&=& (1+ \tau)\, \omega^T + \varphi(\tau)^{-1}\,i \p \tau\wedge \bp \tau. \nonumber
\end{eqnarray}
As we assume $\varphi > 0$ on $(0,b)$, $\omega_{\varphi}$ defines a K\"ahler form
and have recovered momentum construction.

Next we compute the Ricci form $\rho_{\varphi}$ and the scalar curvature $\sigma_{\varphi}$ of 
$\omega_{\varphi}$. 
If we choose $z^0$ to be the coordinate along the holomorphic Reeb flow, 
then it is easy to check that
\begin{equation*}\label{C16}
idz^0 \wedge d\barz^0 = 2 \frac{dr}r \wedge \eta.
\end{equation*}
Using this one can compute the volume form as
\begin{equation*}\label{C17}
\omega_{\varphi}^{m+1} 
=  (1+\tau)^m (m+1) \varphi(\tau)\,\frac i2 dz^0\wedge d\barz^0\wedge(\omega^T)^m.
\end{equation*}
The Ricci form can be computed as
\begin{eqnarray*}
\rho_{\varphi} &=& \rho^T - i\p\bp \log ((1+\tau)^m \varphi(\tau)) \\
&=& \kappa \omega^T - i\p\bp \log ((1+\tau)^m \varphi(\tau)).\nonumber
\end{eqnarray*}
Using 
\begin{equation}\label{C20}
dd^c\, u(\tau)
= u'(\tau) \varphi(\tau)dd^c t + \frac 1{\varphi}(u'\varphi)'  d\tau \wedge d^c\tau
\end{equation}
for any smooth function $u$ of $\tau$, one computes
\begin{equation}\label{P2}
\rho_{\varphi} = \left(\kappa - \frac{m\varphi + (1+\tau)\varphi'}{1+\tau}\right) \omega^T - 
\left(\left(\frac{m\varphi}{1+\tau}\right)' + \varphi''\right)\varphi\,dt\wedge d^ct.
\end{equation}
From this and \eqref{C13} we see that  $\rho_{\varphi} = \alpha \omega_{\varphi}$
if and only if
\begin{equation}\label{P3}
\kappa - \frac{m\varphi + (1+\tau)\varphi'}{1+\tau} = \alpha(1+\tau),
\end{equation}
\begin{equation}\label{P4}
- (\frac{m\varphi}{1+\tau} + \varphi')' = \alpha.
\end{equation}
But \eqref{P4} follows from \eqref{P3}.

\begin{proposition}\label{sol}
Under the condition $\varphi(0) = 0$, the solution $\varphi$ of the ODE \eqref{P3} is given by
\begin{equation}\label{k4}
\varphi(\tau)= \frac{\kappa}{m+1} \left(1+\tau - \frac1{(1+\tau)^m}\right) - \frac\alpha{m+2}\left((1+\tau)^2 - \frac1{(1+\tau)^m}\right) .
\end{equation}
\end{proposition}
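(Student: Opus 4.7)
The plan is to recognize that \eqref{P3}, after clearing the denominator $(1+\tau)$, becomes a first-order linear ODE in $\varphi$ that can be integrated directly. First I would rewrite \eqref{P3} as
\begin{equation*}
(1+\tau)\varphi'(\tau) + m\varphi(\tau) = \kappa(1+\tau) - \alpha(1+\tau)^2,
\end{equation*}
which suggests the substitution $u = 1+\tau$. In the variable $u$ this becomes
\begin{equation*}
\frac{d\varphi}{du} + \frac{m}{u}\varphi = \kappa - \alpha u,
\end{equation*}
a linear equation whose integrating factor is $u^m$.

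Multiplying through by $u^m$ collapses the left-hand side to $\frac{d}{du}(u^m\varphi)$, and the right-hand side $\kappa u^m - \alpha u^{m+1}$ integrates elementarily. This yields
\begin{equation*}
u^m\varphi(u) = \frac{\kappa}{m+1} u^{m+1} - \frac{\alpha}{m+2} u^{m+2} + C
\end{equation*}
for some constant $C$, and dividing by $u^m$ produces
\begin{equation*}
\varphi(\tau) = \frac{\kappa}{m+1}(1+\tau) - \frac{\alpha}{m+2}(1+\tau)^2 + \frac{C}{(1+\tau)^m}.
\end{equation*}

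Finally I would fix $C$ using the boundary condition $\varphi(0) = 0$, which corresponds to $u=1$. Substituting gives $C = -\tfrac{\kappa}{m+1} + \tfrac{\alpha}{m+2}$, and regrouping terms so that each coefficient multiplies the combination $\bigl((1+\tau)^k - (1+\tau)^{-m}\bigr)$ for the appropriate $k$ recovers precisely the formula \eqref{k4}. There is no genuine obstacle here; the only thing to verify is that the derivation of \eqref{P4} from \eqref{P3}, noted in the preceding paragraph, really makes \eqref{P3} alone sufficient, so that the single first-order ODE we solved captures the full Einstein-type condition $\rho_\varphi = \alpha\omega_\varphi$.
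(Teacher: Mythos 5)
Your proposal is correct and is essentially identical to the paper's proof: multiplying \eqref{P3} by the integrating factor $(1+\tau)^m$ is exactly the paper's observation that \eqref{P3} is equivalent to $(\varphi(1+\tau)^m)' = \kappa(1+\tau)^m - \alpha(1+\tau)^{m+1}$, after which both arguments integrate and fix the constant via $\varphi(0)=0$. No further comment is needed.
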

\begin{proof}
The ODE \eqref{P3} is equivalent to
\begin{equation*}\label{k3}
(\varphi(1+\tau)^m)' = \kappa(1+\tau)^m - \alpha (1+\tau)^{m+1}.
\end{equation*}
Using $\varphi(0) = 0$ we obtain the solution \eqref{k4}. 
\end{proof}




Now we consider the situation under which this paper considers.
Let $(L,h)$ be a negative Hermitian line bundle over a K\"ahler manifold such that
the K\"ahler form $\omega_M$ is equal to $\frac i{2\pi}\p\bp \log h$. 
Let $S_0$ be the unit circle bundle with the induced regular Sasakian structure and the radial function
$ r_0 = h(z,z)^{1/2} $
on its K\"ahler cone $C(S_0) \cong \{z \ne 0\ |\ z \in L\}$.
We consider the total space of $L$ to be a resolution of $\overline{C(S_0)}$. 
\begin{definition}\label{bundle}
A Sasakian manifold $S$ is said to be $S^1$-bundle-addapted
if the Sasakian structure of $S$ is a deformation of a regular Sasakian structure on $S_0$ 
for some negative Hermitian line bundle $\pi : (L,h) \to M$ as above such that
\begin{equation}\label{r}
r^2 = r_0^2 \exp(\pi^\ast \psi)
\end{equation}
for some smooth function
$\psi$ on $M$. 
Note that $S$ is identified with $\{r=1\}$ while $S_0$ is identified with $\{r_0 = 1\}$.
\end{definition}

As recalled in Theorem \ref{s7}, it is shown in \cite{FOW} that on a compact toric Sasakian manifold $S$ with 
positive transverse first Chern class and with 
$\bfQ$-Gorenstein K\"ahler cone $C(S)$, we can find a Sasaki-Einstein metric by varying the Reeb vector field. This
change of Reeb vector field results in a change of the radial function to the form \eqref{rem4} for a $T^{m+1}$-invariant function $\psi$. But when $S$ is a deformation of a regular Sasakian manifold $S_0$ which is the unite circle bundle
of a negative line bundle $L$ as in the setting of Theorem \ref{main2}, the function $\psi$ is in particular invariant under
the $S^1$-action, and thus descends to the base space $M$ of $L$ and is of the form $\pi^\ast\psi$ for some smooth function $\psi$ on $M$ as in \eqref{r}. 
The following proposition can be applied for this reason when $L \to M$ is a 
positive rational power of the canonical line bundle over toric Fano manifold $M$.

Note that 
the condition $\varphi(0) = 0$ in Proposition \ref{sol} is natural because in the regular Sasaki-Einstein case this condition
implies $\lim_{\tau \to 0} \omega_\varphi = \omega^T$ by \eqref{C13} and thus the solution $\omega_\varphi$ (if exists) restricts to the K\"ahler-Einstein metric on the
zero section. Thus we impose the condition $\lim_{\tau \to 0}\varphi(\tau) = 0$ hereafter.

\begin{proposition}\label{C30}Let $\omega_{\varphi}$ be the K\"ahler form obtained by
the momentum construction as above starting from a compact toric Sasakian manifold $S$ with 
an $S^1$-bundle-adapted toric $\eta$-Einstein metric $g$, and with $(t_1,t_2) = (-\infty,\infty)$ and
suppose that the profile $\varphi$ is defined on $(0,b) = (0,\infty)$ and that $\lim_{\tau \to 0}\varphi(\tau) = 0$.
Then $\omega_{\varphi}$ defines a 
complete metric, has a noncompact end towards $\tau = \infty$
and extends to a smooth metric on the total space of the line bundle up to the zero section
if and only if $\varphi$ grows at most quadratically as $\tau \to \infty$ and
$\varphi'(0) = 2$. This last condition is equivalent to $\alpha = \kappa -2 = \lambda$. 
The resulting metric along the zero section has an expression which 
 can be extended to the total space of $L$, 
restricts to the associated unit circle bundle $S \cong \{r=1\}$ as a transversely K\"ahler-Einstein 
(Sasakian $\eta$-Einstein) metric scaled by a constant given by \eqref{C139} in the Reeb flow direction, and 
there is a Riemannian submersion from the scaled Sasakian $\eta$-Einstein metric to the
induced metric of the zero section.
\end{proposition}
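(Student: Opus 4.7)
The plan is to translate the K\"ahler form \eqref{C13} into a Riemannian metric that cleanly separates the radial, Reeb, and transverse components, and then to reduce each of the three assertions of the proposition---completeness of a single noncompact end, smooth extension across the zero section, and the geometric description of the boundary metric---to properties of the profile $\varphi$ at $\tau=\infty$, at $\tau=0$, and in the limit $\tau\to 0^+$ respectively. Using $d^ct=\tfrac{1}{2}\eta$ and $dd^ct=\omega^T$ from \eqref{C1} and \eqref{s1}, one arrives at
\begin{equation*}
g_\varphi = (1+\tau)\,g^T + \tfrac{\varphi(\tau)}{2}\bigl(dt\otimes dt+\eta\otimes\eta\bigr),
\end{equation*}
with $g^T$ the transverse K\"ahler metric. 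The radial arc-length from $\tau_0$ to $\infty$ is $\int_{\tau_0}^\infty d\tau/\sqrt{2\varphi(\tau)}$, which diverges exactly when $\sqrt{\varphi(\tau)}=O(\tau)$, i.e.\ $\varphi$ grows at most quadratically; toward $\tau=0$ the same integral is finite whenever $\varphi(0)=0$ and $\varphi'(0)>0$, consistent with the zero section filling in that end rather than leaving a second noncompact end.

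For the smooth extension I invoke the $S^1$-bundle-adapted hypothesis \eqref{r}: the complex manifold $C(S)$ is identified with $L\setminus\{0\text{-section}\}$ and $r^2=r_0^2\exp(\pi^*\psi)$, with $r_0=h(\zeta,\zeta)^{1/2}$ a smooth positive function of the fibre coordinate $\zeta$. Inverting \eqref{C8} near $\tau=0$, where $\varphi(\tau)\sim\varphi'(0)\,\tau$, gives $\tau\sim C\,r^{\varphi'(0)}\sim C\,|\zeta|_h^{\varphi'(0)}\,e^{\varphi'(0)\pi^*\psi/2}$; smoothness of $\tau$ as a function of the holomorphic coordinate $\zeta$ at $\zeta=0$ therefore forces $\varphi'(0)=2$, so that $\tau$ is comparable to $|\zeta|^2$ up to a smooth positive factor. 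Differentiating \eqref{k4} and evaluating at $\tau=0$ yields $\varphi'(0)=\kappa-\alpha$, whence the condition is equivalent to $\alpha=\kappa-2=\lambda$. Conversely, once $\varphi'(0)=2$, a Hwang-Singer style change of variables rewrites $\omega_\varphi=(1+\tau)\omega^T+\varphi(\tau)\,i\,\partial t\wedge\bar\partial t$ in the base-and-fibre coordinates on $L$, with the factor $e^{\pi^*\psi}$ absorbed into a smooth change of trivialisation, so $\omega_\varphi$ extends to a smooth K\"ahler form on all of $Y$.

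Finally, for the geometric description, setting $\tau=0$ in $g_\varphi$ gives the degenerate tensor $g^T$, whose kernel is the Reeb direction; using the $T^{m+1}$-invariance built into the $S^1$-bundle-adapted hypothesis, $g^T$ descends through the bundle projection to a K\"ahler metric on $M$, which is exactly the induced metric on the zero section. On the hypersurface $\{r=1\}$ the term $dt\otimes dt$ drops out, leaving $g_\varphi|_{\{r=1\}}=(1+\tau_0)g^T+\tfrac{\varphi(\tau_0)}{2}\eta\otimes\eta=(1+\tau_0)\bigl[g^T+\tfrac{\varphi(\tau_0)}{2(1+\tau_0)}\eta\otimes\eta\bigr]$, an overall constant multiple of a transversely K\"ahler-Einstein (Sasakian $\eta$-Einstein) metric with the Reeb direction scaled by the constant identified in \eqref{C139}. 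Since both this metric on $S\cong\{r=1\}$ and the induced metric on $M$ are invariant under the bundle $S^1$-action, splitting $TS$ metrically into the $S^1$-fibre direction and its complement yields the claimed Riemannian submersion $S\to M$. I expect the main obstacle to be the smoothness of the extension in the irregular case: the $S^1$-bundle-adapted hypothesis is precisely what converts the irregular radial function $r$ into the smooth fibre coordinate $\zeta$, reducing everything to the single-variable condition $\tau\sim|\zeta|^2$, i.e.\ $\varphi'(0)=2$.
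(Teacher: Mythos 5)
Your treatment of completeness and of the criterion $\varphi'(0)=2\Leftrightarrow\alpha=\kappa-2$ follows the same route as the paper (the paper simply cites Proposition 3.2 of \cite{Futaki11Tohoku} for the quadratic-growth condition, and reads $\varphi'(0)=\kappa-\alpha$ off \eqref{P3} rather than \eqref{k4}). However, the ``only if'' direction of your extension argument has a gap: smoothness of $\tau$ as a function of the fibre coordinate does not force $\varphi'(0)=2$. From $d\tau/dt=a_1\tau+O(\tau^2)$ one gets $\tau\sim C\,r^{a_1}$, and any even integer $a_1$ makes $\tau$ smooth across the zero section. What singles out $a_1=2$ is non-degeneracy of the extended form in the fibre direction: the fibre coefficient of $\omega_\varphi$ is $\varphi(\tau)/r^2$ (against $i\,\partial r\wedge\bar\partial r$), and the correct requirement --- the one the paper uses --- is that this quotient have a finite \emph{non-zero} limit as $r\to 0$: for $a_1>2$ it tends to $0$ and the metric degenerates along the fibres, for $a_1<2$ it blows up. In the converse direction your ``smooth change of trivialisation'' is implicitly the computation $\tau=r^2e^{c_0-\gamma(\tau)}$ with $\gamma$ real-analytic, which is exactly where the constant $e^{c_0}$ of \eqref{C139} comes from; this should be made explicit since the rest of the statement depends on it.

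The final paragraph restricts the wrong metric to $\{r=1\}$. The proposition concerns the \emph{limiting} expression \eqref{lim1}, $\omega^T+2e^{c_0}\,i\,\partial r\wedge\bar\partial r$, regarded as a form on $L$ minus the zero section and then restricted to $\{r=1\}$; there the radial part drops out and one obtains $g^T+e^{c_0}\,\eta\otimes\eta$, whose horizontal part is exactly $g^T$, i.e.\ exactly the induced metric of the zero section --- hence a genuine Riemannian submersion. You instead restrict $\omega_\varphi$ itself to $\{r=1\}$ and get $(1+\tau(0))\,g^T+\tfrac{\varphi(\tau(0))}{2}\,\eta\otimes\eta$, where $\tau(0)$ is the value of the momentum coordinate at $t=0$. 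This is a different object: its Reeb coefficient is not $e^{c_0}$ (in the Calabi--Yau case of Example \ref{example} one finds $\varphi(\tau(0))/(2(1+\tau(0)))=1/(2(m+1))$ while $e^{c_0}=1/(m+1)$), and its horizontal part $(1+\tau(0))\,g^T$ differs from the induced metric of the zero section by the factor $1+\tau(0)>1$, so the map you describe is a homothety on horizontal spaces, not a Riemannian submersion. The paper's proof gets around this by comparing the induced metrics $g_\epsilon$ on the level sets $\{r=\epsilon\}$ with their transplants $g_{1,\epsilon}$ to $\{r=1\}$ --- which have the same transverse part by $S^1$-bundle-adaptedness --- and letting $\epsilon\to 0$; some such limiting argument is needed to obtain the statement as claimed.
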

\begin{proof} By Proposition 3.2 in \cite{Futaki11Tohoku}, 
$\varphi$ must grow at most quadratically as $\tau \to \infty$. 
Now let us consider \eqref{C13} when $\tau \to 0$. Obviously $(1+\tau)\omega_\varphi > 0$
for $\tau \ge 0$. The second term on the right hand side of \eqref{C13} is computed as
\begin{eqnarray*}
\varphi(\tau)\, i\p t\wedge \bp t &=& \varphi(\tau) i \p\log r \wedge \bp\log r \nonumber\\
&=& \frac{\varphi(\tau)}{r^2}  i \p r \wedge \bp r.
\end{eqnarray*}
We wish to find the condition for 
$\lim_{\tau \to 0} \varphi(\tau)/r^2 $
to exist and be non-zero. Since we assume $\varphi(0) = 0$, then by Proposition \ref{sol},
$\varphi(\tau)$ is of the form 
\begin{equation*}\label{C32}
\varphi(\tau) = a_1\tau + O(\tau^2).
\end{equation*}
Since $t = \log r$, $\tau = F'(t)$ and $\varphi(\tau) = F''(t)$ we have
\begin{equation*}\label{C33}
\frac{d\tau}{dt} =\varphi(\tau) = a_1\tau + O(\tau^2).
\end{equation*}
Thus
\begin{equation*}\label{C134}
\lim_{\tau \to 0}\frac{\varphi(\tau)}{r^2} =
\lim_{t \to -\infty}\frac{\varphi'(\tau)\frac{d\tau}{dt}}{2r\frac{dr}{dt}} 
= \frac{a_1}2 \lim_{\tau \to 0}\frac{\varphi(\tau)}{r^2}.
\end{equation*}
Therefore if $\lim_{\tau \to 0} \varphi(\tau)/r^2$ exists and is non-zero then $a_1 = 2$,
i.e. $\varphi'(0) = 2$.
Conversely if $\varphi'(0) = 2$ then we have
\begin{equation*}\label{C135}
\frac{d\tau}{dt} = \varphi(\tau) = 2\tau + O(\tau^2) = 2\tau\beta(\tau)
\end{equation*}
where $\beta(\tau)$ is a function of $\tau$ real analytic near $\tau = 0$ with $\beta(0) = 1$.
Note that the real analyticity of $\beta$ follows from the real analyticity of $\varphi$ in Proposition \ref{sol}. We then have
\begin{equation*}\label{C136}
\frac{d\tau}{\tau\beta(\tau)} = 2dt
\end{equation*}
and from this
\begin{equation*}\label{C137}
\log \tau + \gamma(\tau) = c_0 + 2t
\end{equation*}
for some real analytic function $\gamma(\tau)$ of $\tau$ with $\gamma(0) = 0$ and some constant $c_0$. From this we have
\begin{equation*}\label{C138}
\tau = e^{-\gamma(\tau)} e^{c_0 + 2t} = r^2e^{c_0 - \gamma(\tau)}.
\end{equation*}
Thus we obtain
\begin{equation}\label{C139}
\frac12\lim_{\tau \to 0} \frac{\varphi(\tau)}{r^2} = \frac12\lim_{\tau \to 0}\frac{2\tau + O(\tau^2)}{r^2} = e^{c_0}.
\end{equation}
It follows from \eqref{P3} that, under the condition $\varphi(0) = 0$, the condition $\varphi^\prime(0) = 2$ is equivalent to
\begin{equation}\label{k1}
\alpha = \kappa -2 = \lambda.
\end{equation}

Thus, the limit of $\omega_\varphi$ as $\tau \to 0$ is expressed as
\begin{equation}\label{lim1}
 \omega_{\varphi} = \omega^T + 2e^{c_0} \,i \p r \wedge \bp r 
\end{equation}
restricted to $r=0$. 
Using \eqref{r},  the right hand side of \eqref{lim1} is expressed along $\{r = 0\} = \{z = 0\}$ as
\begin{equation}\label{lim2}
 \lim_{\tau \to 0} \omega_{\varphi} = \frac i2 \p\barpartial (\log h + \psi)  + e^{c_0} \,\frac i2 hdz \wedge d\barz.
\end{equation}
This is an expression of the limiting K\"ahler form on the total space of $L$ along the zero section. The K\"ahler form
of the induced metric on the zero section is given by the first term of the right hand side of \eqref{lim2}.
Let us study the geometry of the metric given by the right hand side of \eqref{lim2}, and see the induced metric on the
zero section is indeed positive definite. Let us consider the metric \eqref{C13} along $\{r = \epsilon\}$ 
\begin{eqnarray}\label{lim3}
\left.\omega_{\varphi}\right|_{r = \epsilon}&:=& (1+ \tau(\epsilon))\, \omega^T + \varphi(\tau(\epsilon))\,\left.i \p t\wedge \bp t \right|_{r=\epsilon} \nonumber\\
&=& (1+ \tau(\epsilon))\, \omega^T + \frac{\varphi(\tau(\epsilon))}{\epsilon^2}\,i \p r \wedge \bp r, 
\end{eqnarray}
and then consider the induced Riemannian metric $g_\epsilon$ to the submanifold $\{r=\epsilon\}$ which is $S^1$-equivariantly
diffeomorphic to the unit circle bundle of $L$. Since all the construction so far are $T^{m+1}$-invariant then the metric $g_\epsilon$ 
is $S^1$-invariant. It follows that there is a Riemannian submersion of $(\{r=\epsilon\}, g_\epsilon)$ to the zero
section $Z$ with some Riemannian metric which we denote by $\gamma_\epsilon$. We wish to show that the limit of $\gamma_\epsilon$ converges
to a positive definite metric $\gamma_0$. If this is confirmed then $(\{r=\epsilon\}, g_\epsilon)$ collapses to $(Z, \gamma_0)$ and the
fundamental 2-form of $\gamma_0$ is the second term of the right hand side of \eqref{lim2}.
In order to see the behavior of  submersion $(\{r=\epsilon\}, g_\epsilon) \to (Z, \gamma_\epsilon)$, we regard the right hand side of 
\eqref{lim3} as a K\"ahler metric on $L-Z$, and then restrict it to $\{r=1\}$. Let us denote this restricted metric to $\{r=1\}$ by $g_{1,\epsilon}$.
Because of the $S^1$-adaptedness, $(\{r=\epsilon\}, g_\epsilon)$ and $(\{r=1\},g_{1,\epsilon})$ only differ by scaling of the $S^1$-orbits,
and thus have the same transverse metric for the $S^1$ orbits. Hence we have a Riemannian submersion $(\{r=1\}, g_{1,\epsilon}) \to (Z,\gamma_\epsilon)$.
But $g_{1,\epsilon}$ converges as $\epsilon \to 0$ to the induced metric to $\{r=1\}$ of the metric on $L-Z$ expressed by \eqref{lim1}.
Let us put $g_{1,0} := \lim_{\epsilon \to 0}g_{1,\epsilon}$. Then there is a submersion $(\{r=1\}, g_{1,0})$ to $(Z, \gamma_0)$ for some
Riemannian metric $\gamma_0$, which is of course positive definite. It follows that $(\{r=\epsilon\}, g_\epsilon)$ collapses to $(Z, \gamma_0)$. 
Note that $g_{1,0}$ is an $\eta$-Einstein metric scaled by $e^{c_0}$ in the Reeb flow direction since
$$2i \p r \wedge\bp r = 2dr \wedge d^c r|_{r=1} = dr\wedge \eta,$$
and the right hand side is the fundamental 2-form of $dr^2 + (dr\circ J)^2$. 
This completes the proof of
Proposition \ref{C30}. 
\end{proof}
\noindent
In principle one can compute $e^{c_0}$ in \eqref{C139} for each case when the ODE is solved, see Example \ref{example}.

\begin{proof}[Proof of Theorem \ref{main2}.] For the standard regular Sasakian structure on the total space of $S^1$-bundle of $L^k = K_M^{k/p}$, 
the basic cohomology classes on $S$ are the ordinary cohomology classes of the base manifold $M$.
Since $ 2[\omega_0^T]_B  = -c_1(L^k) = -\frac kp c_1(K_M)$ for the transverse K\"ahler form $\omega_0^T$ 
of the regular Sasakian structure, we have
\begin{equation*}\label{main2.8}
[\rho_{\omega_0^T}]_B = \frac{2p}k [\omega_0^T]_B,
\end{equation*}
where $\rho_{\omega_0^T}$ denotes the Ricci form of $\omega_0^T$. 
By $D$-homothetic transformation \eqref{s12} with 
$$ a = \frac{p}{k(m+1)} $$
we obtain a Sasakian structure $g'$ with 
$$ [\rho_{\omega^{\prime T}}]_B = (2m+2) [\omega^{\prime T}]_B.$$
Since the standard Sasakian structure is toric, the volume minimizing argument 
(c.f. Theorem \ref{s7}) gives
a Sasaki-Einstein metric, still denoted by $g^\prime$, with 
$$ 
\rho_{\omega^{\prime T}} = (2m+2) \omega^{\prime T}.
$$
Then using the $D$-homothetic transformation with $a = \frac{k(m+1)}{p}$
we obtain an $\eta$-Einstein metric $g$ with
\begin{equation*}\label{main2.9}
\rho^T = \frac{2p}k \omega^T
\end{equation*}
on the total space of $S^1$-bundle of $L^{\otimes k}$. 
Here, recall that we had used the notation $\rho^T$ for $\rho _{\omega^T}$.
As discussed in the paragraph after Definition \ref{bundle}, the Sasakian $\eta$-Einstein metric
thus obtained is $S^1$-bundle-adapted 
with \eqref{r}. 
Now we start the momentum construction with this $S^1$-bundle-adapted 
Sasakian $\eta$-Einstein metric, and then obtain the ODE \eqref{P3}.
From \eqref{C0} we have $\kappa = \frac{2p}k$. 
Here, we choose $\alpha$ so that 
\begin{equation*}\label{k5}
\alpha =  \frac{2p}k - 2.
\end{equation*}
Then $\alpha = \kappa - 2 = \lambda$, which is equivalent to $\varphi'(0) = 2$.
The solution $\varphi$ with \eqref{k4} is positive for $\tau > 0$ since we assume $k \ge p$,
grows at most quadratically as $\tau \to \infty$, and satisfies $\varphi(0) = 0$ and $\varphi'(0) = 2$.
Hence it follows from Proposition \ref{C30} that 
there exists a complete K\"ahler-Einstein metric $\omega_\varphi$ 
on the total space of $L^{\otimes k}$ with 
\begin{equation*}\label{main2.7}
\rho_{\omega_\varphi} = \left(\frac{2p}k - 2\right)\omega_\varphi,
\end{equation*}
where $\varphi$ is the profile of the momentum construction.
As described in Proposition \ref{C30} and its proof, the resulting metric along the zero section has an expression which 
 can be extended to the total space $Y$ of $L^k$, 
restricts to the associated unit circle bundle $S \cong \{r=1\}$ as a Sasakian $\eta$-Einstein metric scaled by a constant given by \eqref{C139} in the Reeb flow direction, and 
there is a Riemannian submersion from the scaled Sasakian $\eta$-Einstein metric to the
induced metric of the zero section.

This metric is Ricci-flat when $k = p$, that is $L^{\otimes k} = K_M$.
This completes the proof of Theorem \ref{main2}.
\end{proof}

\begin{proof}[Proof of Remark \ref{rem1}.]
Recall that we lifted $\omega^T$ to $C(Y) = Y - Z$, where $Z$ is the zero section, by using the expression \eqref{s1}. Using it we considered
$\omega_\varphi$ by the equations \eqref{C3}. 
We showed that $\omega_\varphi$ extends to the zero section $Z$ to give a 
complete K\"ahler-Einstein metric on $Y$, and we still denote the extended metric
by the same letter $\omega_\varphi$. Thus $\omega_\varphi$ is an exact form outside the zero section $Z$.
It follows that the cohomology class $[\omega_\varphi]$ has a compact support, and in fact the support is 
on the zero section.
But \eqref{lim2} shows that the pull back of the class $[\omega_\varphi]$ to $Z$ is $- \frac12 c_1(L^k) = -\frac k{2p} c_1(K_M)$. 
This completes the proof of Remark 1.
\end{proof}
\begin{example}\label{example}
For the Calabi-Yau case $k=p$ in Theorem \ref{main2}
we have $e^{c_0} = \frac1{m+1}$, and the induced metric to the zero section of 
the irregular Eguchi-Hanson type metric has a Riemannian submersion form
the Sasakian $\eta$-Einstein metric. 
scaled by $\frac1{m+1}$ in the Reeb flow direction.
\end{example}
\begin{proof} From $k=p$ we have $\kappa =2$ and $\alpha = 0$. Thus by \eqref{k4}
\begin{equation*}
\varphi(\tau) = \frac 2{m+1} ((1+ \tau) - \frac 1{(1+\tau)^m}).
\end{equation*}
Take $\tau_0 = 2^{1/(m+1)} -1$. Then $\tau(t)$ is obtained from \eqref{C8} as
$$
t = \frac 12 \log ((\tau(t) + 1)^{m+1} -1).
$$
Since $t = \log r$, we have 
\begin{equation}\label{CY3}
\tau(t) = (r^2 + 1)^{1/(m+1)} - 1,
\end{equation}
and 
\begin{equation}\label{CY1}
\varphi(\tau) = \frac 2{m+1} \left(\left(r^2 + 1\right)^{\frac1{m+1}} - \frac 1{\left(r^2 + 1\right)^\frac m{m+1}}\right).
\end{equation}
It follows that 
$$ e^{c_0} =\frac12 \lim_{r \to 0} \frac{\varphi(\tau)}{r^2} = \frac1{m+1}.
$$
\end{proof}

\begin{proof}[Proof of Remark \ref{rem2}.]
The Calabi-Yau metric on $K_M^{-1}$ in Theorem \ref{main2} for $k=p$ is obtained by
the momentum construction starting with the Sasakian $\eta$-Einstein metric such that 
$\rho^T = 2 \omega^T$. This equality is \eqref{C0} with $\kappa = 2$. The corresponding cone
metric on $C(S)$ is not Calabi-Yau. 
In order to get a Calabi-Yau cone metric on $C(S)$, 
we need $D$-homothetic transformation  from a Sasakian metric $g$ with $\rho^T = 2 \omega^T$ to 
a Sasakian metric $g^\prime$ with $\rho^{\prime T} = (2m+2) \omega^{\prime T}$ because 
then $(C(S), \bar{g^\prime})$ is Calabi-Yau by Fact \ref{s3}, (b). This $D$-homothetic transformation is
achieved by $r^\prime = r^{1/(m+1)}$ where $r$ and $r^\prime$ are the radial functions on $C(S)$ for 
$g$ and $g^\prime$ respectively. Then 
$\tau(t)$ in \eqref{CY3} and 
$\varphi(\tau)$ in \eqref{CY1} respectively become
\begin{equation}\label{CY4}
\tau(t) = \left(r^{\prime 2(m+1)} +1\right)^\frac1{m+1} - 1,
\end{equation}
and 
\begin{equation}\label{CY2}
\varphi(\tau) = \frac 2{m+1} \left(\left(r^{\prime 2(m+1)} + 1\right)^{\frac1{m+1}} 
- \frac 1{\left(r^{^\prime 2(m+1)} + 1\right)^\frac m{m+1}}\right).
\end{equation}
Then by \eqref{C13},
\begin{equation}\label{CY5}
\omega_\varphi 
= \left(r^{\prime 2(m+1)} + 1\right)^\frac 1{m+1} (m+1)\omega^{\prime T}  + \frac1{\varphi(\tau)} i \p\tau \wedge \bp\tau.
\end{equation}
Comparing this with
\begin{equation}\label{CY6}
(m+1)\omega^\prime = \frac{(m+1)}2 i \p \bp r^{\prime 2} = r^{\prime 2} (m+1)\omega^{\prime T} 
+ 2(m+1)i\p r^\prime \wedge \bp r^\prime,
\end{equation}
one can check that the difference of \eqref{CY5} and \eqref{CY6} is $O(r^{\prime -2m})$.
In fact, the difference of the first terms is $O(r^{\prime -2m})$, and the second terms $O(r^{\prime -(2m+2)})$.
Since $\omega^\prime$ is Ricci-flat so is $(m+1)\omega^\prime$. 
Thus the Calabi-Yau metric obtained in Theorem \ref{main2} is asymptotic to the Calabi-Yau
cone metric $(m+1)\omega^\prime$ with order $O(r^{\prime -2m})$.
It follows from the uniqueness theorem of \cite{ConlonHein13} that our Calabi-Yau metric coincides
with those obtained in \cite{vanC10} and \cite{Goto}. 
\end{proof}

\section{Soliton analogues}
In this section we consider the case when the momentum construction \eqref{C13} on $C(S)$ 
satisfies the K\"ahler-Ricci soliton equation
\begin{equation}\label{KRS1}
\rho_\varphi -\alpha \omega_\varphi = -i\partial\barpartial Q(t)
\end{equation}
where $Q(t)$ is a smooth function of $t = \log r$ whose gradient is a holomorphic vector field.
Then by Lemma 4.1 in \cite{FutakiWang11}, $Q(t)$ is necessarily of the form
\begin{equation*}\label{KRS2}
Q = \mu\tau + c,
\end{equation*}
where $c$ is a constant. Using \eqref{C20}, one computes
\begin{eqnarray*}
i\partial\barpartial\, Q
&=& \frac{dQ}{d\tau} \varphi(\tau)dd^c t + \left(\frac{dQ}{d\tau}\varphi\right)'  \varphi dt \wedge d^ct \nonumber\\
&=& \mu\varphi(\tau)\omega^T + (\mu\varphi(\tau))' \varphi(\tau) dt \wedge d^ct.
\end{eqnarray*}
Comparing this with \eqref{C13} and \eqref{P2}, we obtain
\begin{equation}\label{KRS4}
\kappa - \frac{m\varphi + (1 + \tau)\varphi'}{1+\tau} = \alpha(1+\tau) - \mu\varphi(\tau)
\end{equation}
and
\begin{equation}\label{KRS5}
 - \left(\frac{m\varphi}{1+\tau} +  \varphi'\right)' = \alpha - (\mu\varphi(\tau))'.
\end{equation}
But \eqref{KRS5} follows from \eqref{KRS4}. 
If we require $\varphi(0) = 0$ and $\varphi^\prime(0) = 2$ it follows from \eqref{KRS4} that
\begin{equation*}\label{KRS6}
\kappa - 2 = \alpha,
\end{equation*}
and \eqref{KRS4} becomes
\begin{equation}\label{KRS7}
\varphi' + \left(\frac{m}{1+\tau} - \mu\right)\varphi + (\kappa - 2)\tau - 2 = 0.
\end{equation}
In general a solution to the ODE $y' + p(x)y = q(x)$ is given by
\begin{equation}\label{ode}
 y = e^{- \int p(x) dx} \left( \int q(x) e^{\int p(x) dx} dx +C\right) .
 \end{equation}
It follows from \eqref{ode} that the solution to \eqref{KRS7} is given by
\begin{equation}\label{KRS8}
\vph(\tau) = \frac{\nu e^{\mu(1+\tau)}}{(1+\tau)^m} + \frac{(\kappa-2)(1+\tau)}{\mu} +\frac{\kappa -2 - \frac{\kappa\mu}{m+1}}{\mu^{m+2}} \sum_{j=0}^m \frac{(m+1)!}{j!} \mu^j (1+\tau)^{j-m}
\end{equation}
for some constant $\nu$. But by the requirement $\varphi(0) = 0$, 
$\nu$ is determined by
\begin{eqnarray}\label{KRS9}
 \nu &=& e^{-\mu }  \left( \frac {-\kappa +2}\mu  + \frac{-\kappa + 2 +\frac{\kappa\mu}{m+1}}{\mu^{m+2}} \sum_{j=0}^m \frac{(m+1)!}{j!} \mu^j \right)\\
 &=:& \nu(\kappa,\mu).\nonumber
 \end{eqnarray}
Thus the solution \eqref{KRS8} becomes
\begin{eqnarray}\label{KRS10}
\vph(\tau) &=&  \left( \frac {-\kappa + 2}\mu  + \frac{-\kappa + 2 +\frac{\kappa\mu}{m+1}}{\mu^{m+2}} \sum_{j=0}^m \frac{(m+1)!}{j!} \mu^j \right)\frac{e^{\mu\tau}}{(1+\tau)^m} \nonumber\\
&&+ \frac{(\kappa - 2)(1+\tau)}{\mu} +\frac{\kappa - 2 - \frac{\kappa\mu}{m+1}}{\mu^{m+2}} \sum_{j=0}^m \frac{(m+1)!}{j!} \mu^j (1+\tau)^{j-m}.
\end{eqnarray}

 Let $M$ be a Fano manifold of dimension $m$, and $L \to M$ be a negative line bundle with $K_M = L^{p}$, $p \in \bfZ^+$.
 Take $k \in \bfZ^+$. Let $S$ be the $U(1)$-bundle associated with $L^{k}$, which is a regular Sasakian manifold with the K\"ahler cone $C(S)$ biholomorphic to $L^{k}$ minus
 the zero section. 
 We assume that
 $S$ admits a possibly irregular Sasakian $\eta$-Einstein metric which is $S^1$-bundle-adapted 
 in the sense of Definition \ref{bundle}. 
 When $M$ is toric this is indeed the case as we saw in the proof of Theorem \ref{main2}.

 Let $\kappa = \frac{2p}k$ and $\omega$ be the $\eta$-Einstein
 Sasakian metric such that
 $$ \rho^T = \kappa \omega^T,$$
 where $\omega^T$ and $\rho^T$ are respectively the transverse K\"ahler form and its transverse Ricci form. 
 In this set-up we start the momentum construction for the K\"ahler-Ricci soliton \eqref{KRS1}, and following
 the subsequent computations we obtain the solution \eqref{KRS10} requiring $\varphi(0) = 0$ and $\varphi'(0) = 2$.
 But we have not specified the region of the variable $\tau$ yet.
 
 First we consider the case $k \ge p$. Then of course $\kappa \le 2$ and $\alpha \le 0$.
 In this case we take $\mu < 0$, and take the region of the variable $\tau$ to be $[0, \infty)$.
 \begin{claim}\label{claim1}
 $\varphi > 0$ on $(0, \infty)$.
 \end{claim}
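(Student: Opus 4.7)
The plan is to read the derivative information directly off the ODE \eqref{KRS7} rather than the explicit formula \eqref{KRS10}, which would be painful to analyze term-by-term. The key observation is that when $\varphi(\tau)=0$ the terms involving $\varphi$ in \eqref{KRS7} drop out, so
\begin{equation*}
\varphi'(\tau)\ =\ 2-(\kappa-2)\tau\ =\ 2+(2-\kappa)\tau
\end{equation*}
at any zero of $\varphi$. Under the standing hypothesis $k\ge p$ we have $\kappa=\tfrac{2p}{k}\le 2$, so $(2-\kappa)\tau\ge 0$ for $\tau\ge 0$, hence $\varphi'(\tau)\ge 2$ at every zero of $\varphi$ in $[0,\infty)$. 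Note that $\mu$ has dropped out entirely, so the sign of $\mu$ plays no role in this particular argument.

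First I would record that the ODE \eqref{KRS7} is linear of first order with coefficient $\frac{m}{1+\tau}-\mu$ smooth on $[0,\infty)$, so the solution $\varphi$ determined by $\varphi(0)=0$ exists and is smooth on the whole half-line $[0,\infty)$; in particular the expression \eqref{KRS10} makes sense on $[0,\infty)$. Second, since $\varphi(0)=0$ and $\varphi'(0)=2>0$, we have $\varphi(\tau)>0$ on some interval $(0,\delta)$. Third, I would argue by contradiction: if $\varphi$ failed to be positive on all of $(0,\infty)$, then by continuity there would be a smallest $\tau_0>0$ with $\varphi(\tau_0)=0$, and necessarily $\varphi'(\tau_0)\le 0$. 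But the computation above gives $\varphi'(\tau_0)\ge 2>0$, a contradiction. Hence $\varphi>0$ on $(0,\infty)$.

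The argument is short and the main (such as it is) obstacle is simply to recognize that plugging $\varphi=0$ into the ODE kills the $\mu$-dependence and leaves a linear expression in $\tau$ with nonnegative slope $2-\kappa$; once that is noticed, the ``smallest zero'' contradiction is immediate. No analysis of the unwieldy closed form \eqref{KRS10} (with its exponential and polynomial terms in $(1+\tau)$) is required.
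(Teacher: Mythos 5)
Your proof is correct and is essentially identical to the paper's: the paper likewise evaluates \eqref{KRS7} at a hypothetical smallest zero $a>0$ to get $\varphi'(a)=2+(2-\kappa)a>0$, contradicting $\varphi'(a)\le 0$. The only difference is your (harmless, slightly more careful) preliminary remark that the linear ODE guarantees a smooth solution on all of $[0,\infty)$.
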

 \begin{proof}
 Since
 $\varphi(0) = 0$ and $\varphi'(0) = 2$ then $\varphi > 0$ on $(0,\epsilon)$ for some $\epsilon > 0$. 
 Suppose that $\varphi$ is non-positive somewhere on $(0, \infty)$, and that $a>0$ is the smallest point
 where 
 $\varphi(a) = 0$. Then $\varphi'(a) \le 0$. But
 by \eqref{KRS7}, $\varphi'(a) = 2 + (2-\kappa)a > 0$ for any $a \in (0, \infty)$. 
 This is a contradiction.
 \end{proof}
  \begin{claim}\label{claim2}
 $F'$ maps $(-\infty,\infty)$ diffeomorphically onto $(0, \infty)$. 
 \end{claim}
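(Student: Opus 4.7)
The plan rests on the fact (from Claim \ref{claim1}) that $\varphi > 0$ on $(0, \infty)$, combined with the defining relations $F'(t) = \tau$ and $F''(t) = \varphi(\tau)$. Since $F''(t) = \varphi(F'(t)) > 0$ wherever $F'(t) \in (0, \infty)$, $F'$ is strictly increasing and $C^\infty$, hence a local diffeomorphism onto its image. The only substantive task is therefore to verify that the maximal $t$-interval over which $F'$ is defined with values in $(0,\infty)$ is all of $\bfR$, and that the image exhausts $(0, \infty)$.

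I would invert the relation $dt/d\tau = 1/\varphi(\tau)$ and write, for a basepoint $\tau_0 \in (0, \infty)$,
\begin{equation*}
t(\tau) = t_0 + \int_{\tau_0}^{\tau} \frac{dx}{\varphi(x)},
\end{equation*}
so that the desired conclusion reduces to the two divergences
\begin{equation*}
\int_{0}^{\tau_0} \frac{dx}{\varphi(x)} = +\infty \quad \text{and} \quad \int_{\tau_0}^{\infty} \frac{dx}{\varphi(x)} = +\infty.
\end{equation*}
The first is immediate from $\varphi(x) = 2x + O(x^2)$ near $0$, a consequence of $\varphi(0) = 0$ and $\varphi'(0) = 2$, which forces the integrand to behave like $1/(2x)$ at the lower endpoint.

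For divergence at infinity I would read off the asymptotics of $\varphi$ from the explicit formula \eqref{KRS10}. With $\mu < 0$, the exponential $e^{\mu \tau}/(1+\tau)^m$ and each $(1+\tau)^{j-m}$ with $j < m$ decay to $0$, while the $j=m$ summand of the polynomial part contributes a constant. When $k > p$ (so $\kappa < 2$) the linear term $(\kappa - 2)(1+\tau)/\mu$ has the quotient of two negatives as its coefficient and dominates, giving $\varphi(\tau) \sim \frac{\kappa - 2}{\mu}\,\tau$ with positive slope; when $k = p$ (so $\kappa = 2$) that linear term drops out and the remaining $j=m$ constant simplifies to $-2/\mu > 0$, so $\varphi$ approaches a positive horizontal asymptote. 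In either subcase $1/\varphi(x)$ is bounded below by a positive multiple of $1/(1+x)$ for large $x$, so $\int^\infty dx/\varphi$ diverges.

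The hard part will be the asymptotic bookkeeping for \eqref{KRS10} in the steady case $k = p$, where the nominal leading linear term disappears and one must identify the correct positive constant $-2/\mu$ coming from the $j = m$ contribution to the polynomial sum; the positivity depends crucially on the sign convention $\mu < 0$ adopted just before Claim \ref{claim1}. Once both divergences are in hand, strict monotonicity of $F'$ together with surjectivity onto $(0, \infty)$ closes out the claim.
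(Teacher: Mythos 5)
Your proof is correct and follows essentially the same route as the paper: strict monotonicity of $F'$ from $\varphi>0$, divergence of $\int_0^{\tau_0} dx/\varphi(x)$ from $\varphi(x)=2x+O(x^2)$, and divergence of $\int_{\tau_0}^{\infty} dx/\varphi(x)$ from the at-most-linear growth of $\varphi$, all read off through the change of variable \eqref{C8}. Your bookkeeping at infinity is in fact slightly more careful than the paper's one-line assertion that ``$\varphi$ grows linearly,'' since in the steady case $k=p$ (i.e.\ $\kappa=2$) the linear term vanishes and $\varphi$ tends to the positive constant $-2/\mu$; either way $1/\varphi$ is non-integrable at infinity, so the conclusion stands.
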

 \begin{proof}
 Since $F''(t) = \varphi(\tau) > 0$ by Claim \ref{claim1}, $F'$ maps its domain diffeomorphically onto its image.
 From $\varphi(0) = 0$, $\varphi'(0) = 2$ and \eqref{C8}, we see that $t \to -\infty$ as $\tau \to 0$. On the other hand,
 as we take $\mu < 0$, $\varphi(\tau)$ grows linearly as $\tau \to \infty$, and we see from \eqref{C8} that 
 $t \to \infty$ as $\tau \to \infty$. Hence the domain and the range of $F'$ are respectively $(-\infty,\infty)$ and $(0, \infty)$.
 \end{proof}
\begin{claim}\label{claim3}
The metric $\omega_\varphi$ defines a complete metric on $L^{\otimes k}$.
The resulting metric along the zero section has an expression which 
 can be extended to the total space of $L^k$, 
restricts to the associated unit circle bundle $S \cong \{r=1\}$ as a transversely K\"ahler-Einstein 
(Sasakian $\eta$-Einstein) metric scaled by a constant given by \eqref{C139} in the Reeb flow direction, and 
there is a Riemannian submersion from the scaled Sasakian $\eta$-Einstein metric to the
induced metric of the zero section.
 \end{claim}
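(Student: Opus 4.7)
The plan is to reduce Claim \ref{claim3} to a direct application of Proposition \ref{C30}. That proposition already carries out the analysis of completeness, smooth extension across the zero section, the description of the limiting metric, and the submersion picture once a profile $\varphi$ is given satisfying a short list of structural conditions. The soliton equation \eqref{KRS1} plays no role in the extension or submersion arguments — the entire geometric content is encoded in the asymptotics of $\varphi$ at $\tau = 0$ and $\tau \to \infty$ — so the task reduces to checking those hypotheses for the explicit profile \eqref{KRS10}.

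First I would record the initial-value conditions at $\tau = 0$. The profile was constructed so that $\varphi(0) = 0$ (this is how $\nu(\kappa,\mu)$ was chosen in \eqref{KRS9}), and the relation $\alpha = \kappa - 2$ combined with \eqref{KRS4} forces $\varphi'(0) = 2$. Second, Claim \ref{claim1} supplies positivity $\varphi > 0$ on $(0,\infty)$, while Claim \ref{claim2} shows that $F'$ maps $(-\infty,\infty)$ diffeomorphically onto $(0,\infty)$. Thus the setup of Proposition \ref{C30} with $(t_1,t_2) = (-\infty,\infty)$ and $(0,b) = (0,\infty)$ is in force.

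The one verification that requires attention is the hypothesis that $\varphi$ grows at most quadratically as $\tau \to \infty$. Here I would inspect \eqref{KRS10} term by term under the standing assumption $\mu < 0$: the coefficient of $e^{\mu\tau}/(1+\tau)^m$ is a constant, so this term decays exponentially; the term $(\kappa - 2)(1+\tau)/\mu$ is linear in $\tau$; and the finite sum $\sum_{j=0}^{m} \frac{(m+1)!}{j!}\mu^j (1+\tau)^{j-m}$ has all exponents $j - m \le 0$ and so stays bounded as $\tau \to \infty$. Consequently $\varphi(\tau) = O(\tau)$, which is well within the quadratic growth bound.

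Once these conditions are verified, Proposition \ref{C30} applies verbatim and yields all three conclusions of Claim \ref{claim3}: the completeness of $\omega_\varphi$ on $L^{\otimes k}$, the fact that the limiting metric along the zero section has the expression \eqref{lim2} extending smoothly over $L^k$ (with constant $e^{c_0}$ extracted as in \eqref{C139} from the ODE near $\tau = 0$), and the Riemannian submersion from the scaled Sasakian $\eta$-Einstein metric on $\{r=1\}$ onto the induced metric on the zero section. I expect the only step requiring any care to be the growth estimate above; everything else is either tautological from the construction or already done in Section 3.
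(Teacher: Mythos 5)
Your proposal is correct and follows essentially the same route as the paper, which likewise disposes of Claim \ref{claim3} by invoking the arguments of Proposition \ref{C30} once $\varphi(0)=0$, $\varphi'(0)=2$, positivity, and the (at most quadratic, here linear) growth of $\varphi$ as $\tau\to\infty$ are in hand. Your term-by-term growth check of \eqref{KRS10} simply makes explicit what the paper leaves as a one-line assertion.
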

 \begin{proof} Since $\varphi(\tau)$ grows linearly as $\tau \to \infty$, $\varphi(0) = 0$ and $\varphi'(0) = 2$
then this claim follows from the same arguments as in the proof of Proposition \ref{C30}. 
 \end{proof}
 
 Next we turn to the case $k < p$. 
 Then $\kappa = \frac{2p}k > 2$ and $\alpha = \kappa - 2 > 0$.
\begin{claim}\label{claim4}
In \eqref{KRS9} we can take some positive $\mu$ so that $\nu(\kappa,\mu) = 0$, and 
with this choice of $\mu$, the solution $\varphi(\tau)$ is expressed as
\begin{equation}\label{KRS11}
\vph(\tau) = \frac{(\kappa - 2)(1+\tau)}{\mu} +\frac{\kappa - 2 - \frac{\kappa\mu}{m+1}}{\mu^{m+2}} \sum_{j=0}^m \frac{(m+1)!}{j!} \mu^j (1+\tau)^{j-m}.
\end{equation}
 \end{claim}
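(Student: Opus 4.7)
The plan is to show that the function $\nu(\kappa,\mu)$ changes sign on the positive real axis, and then observe that the simplified form of $\varphi(\tau)$ is immediate once $\nu=0$ is substituted into the general solution \eqref{KRS8}. My first step is to clear the removable singularities at $\mu=0$ by multiplying by $e^{\mu}\mu^{m+2}$, forming the polynomial
\begin{equation*}
g(\mu) := e^{\mu}\mu^{m+2}\,\nu(\kappa,\mu) = (2-\kappa)\mu^{m+1} + \left(2-\kappa+\tfrac{\kappa\mu}{m+1}\right)\sum_{j=0}^m \frac{(m+1)!}{j!}\mu^j.
\end{equation*}
Since $e^{\mu}\mu^{m+2}>0$ for $\mu>0$, a positive root of $\nu(\kappa,\mu)$ is exactly a positive root of $g$.

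The second step is an endpoint sign analysis of $g$. At $\mu = 0$ only the constant term of the second summand survives, giving $g(0)=(2-\kappa)(m+1)!$, which is strictly negative because in this case $\kappa = 2p/k > 2$. For the behavior as $\mu \to \infty$ the key observation is that the coefficient of $\mu^{m+1}$ in $g$ equals
\begin{equation*}
(2-\kappa) + \tfrac{\kappa}{m+1}\cdot\tfrac{(m+1)!}{m!} = (2-\kappa)+\kappa = 2,
\end{equation*}
so $g(\mu)\sim 2\mu^{m+1}\to +\infty$. The intermediate value theorem then provides $\mu>0$ with $g(\mu)=0$, hence $\nu(\kappa,\mu)=0$.

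For the final step, I substitute $\nu=0$ into \eqref{KRS8}; this eliminates the term $\nu e^{\mu(1+\tau)}/(1+\tau)^m$ and leaves precisely the expression \eqref{KRS11}. The only nontrivial part of the argument is recognizing the cancellation $(2-\kappa)+\kappa=2$ responsible for the positive leading coefficient of $g$; without it one might worry that $g$ stays nonpositive, and the existence of the desired $\mu$ would fail. Once this algebraic identity is noted, IVT closes the claim.
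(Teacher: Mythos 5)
Your proof is correct and follows essentially the same route as the paper: both arguments rest on the sign of $\nu(\kappa,\mu)$ being negative near $\mu=0$ (from the $(2-\kappa)(m+1)!$ term) and positive for large $\mu$ (from the cancellation $(2-\kappa)+\kappa=2$ in the leading coefficient), followed by the intermediate value theorem; your step of clearing the factor $e^{\mu}\mu^{m+2}$ to work with a polynomial is only a cosmetic repackaging of the paper's asymptotic analysis. The paper additionally remarks that this $\mu$ is unique by a sign-change count due to Feldman--Ilmanen--Knopf, but that is not needed for the claim as stated.
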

 \begin{proof}
 The leading order term as $\mu \to 0$ is $(-\kappa + 2)/\mu^{m+2}$, and thus $\nu(\kappa,\mu) \to -\infty$.
 On the other hand the leading order term in the paretheses as $\mu \to \infty$ is $2/\mu$, and thus $\nu(\kappa,\mu)$ tends to be positive.
 Thus we can find a positive $\mu$ such that $\nu(\kappa,\mu) = 0$. (This choice of $\mu$ is actually unique
 since the coefficients of the monomials inside $\nu(\kappa,\mu)$ changes sign only once when arranged
 from lower to higher. This argument is due to \cite{FIK}.)
 \end{proof}
 \begin{claim}\label{claim5}
$\vph(\tau) > 0$ 
for all $\tau > 0$.
 \end{claim}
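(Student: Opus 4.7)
The plan is to absorb the first-order linear ODE \eqref{KRS7} satisfied by $\varphi$ into an integrating factor. Since the coefficient of $\varphi$ in \eqref{KRS7} is $m/(1+\tau) - \mu$, multiplying by $(1+\tau)^m e^{-\mu(1+\tau)}$ converts the equation into
\begin{equation*}
\frac{d}{d\tau}\bigl(\varphi(\tau)(1+\tau)^m e^{-\mu(1+\tau)}\bigr) = \bigl(2 - (\kappa-2)\tau\bigr)(1+\tau)^m e^{-\mu(1+\tau)}.
\end{equation*}
Setting $\Phi(\tau) := \varphi(\tau)(1+\tau)^m e^{-\mu(1+\tau)}$, the positivity of $\varphi$ on $(0,\infty)$ is equivalent to that of $\Phi$, because the weight $(1+\tau)^m e^{-\mu(1+\tau)}$ is strictly positive.

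Next I would verify the two boundary values $\Phi(0) = 0$ and $\lim_{\tau\to\infty}\Phi(\tau) = 0$. The first is immediate from $\varphi(0)=0$. For the second, I use the special choice $\nu(\kappa,\mu) = 0$ provided by Claim \ref{claim4}: in the general solution \eqref{KRS8} this choice kills the exponentially growing homogeneous term $\nu e^{\mu(1+\tau)}/(1+\tau)^m$, leaving \eqref{KRS11}, whose leading behavior as $\tau\to\infty$ is the linear term $(\kappa-2)(1+\tau)/\mu$. Since $\mu > 0$, the exponential factor $e^{-\mu(1+\tau)}$ dominates any polynomial in $\tau$, so $\Phi(\tau)\to 0$ at infinity.

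Finally, because $\kappa > 2$ in the case $k < p$, the factor $2 - (\kappa-2)\tau$ is strictly positive on $[0,\tau_*)$, zero at $\tau_* := 2/(\kappa-2)$, and strictly negative on $(\tau_*,\infty)$. Thus $\Phi$ strictly increases on $[0,\tau_*]$, strictly decreases on $[\tau_*,\infty)$, and agrees with $0$ at both endpoints $\tau = 0$ and $\tau = \infty$. Consequently $\Phi(\tau) > 0$ on the whole open interval $(0,\infty)$, which gives $\varphi(\tau) > 0$ there.

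The only delicate step is the decay $\Phi(\infty)=0$; this is precisely where the transcendental calibration of $\mu$ in Claim \ref{claim4} is essential, since without $\nu = 0$ the homogeneous piece would make $\Phi$ blow up at infinity and the sign-change/pinch-off argument for $\Phi$ would fail.
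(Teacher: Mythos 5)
Your proof is correct, but it follows a genuinely different route from the paper's. The paper argues directly from the closed-form solution \eqref{KRS11}: since $\varphi(0)=0$ and $(\kappa-2)/\mu>0$, the coefficient $(\kappa-2-\frac{\kappa\mu}{m+1})/\mu^{m+2}$ must be negative, and then each of the two terms of \eqref{KRS11} is nondecreasing in $\tau$ (the powers $(1+\tau)^{j-m}$, $j\le m$, are nonincreasing and carry a negative coefficient), so $\varphi$ is monotone increasing from $\varphi(0)=0$ and hence positive. You instead work from the ODE \eqref{KRS7}: the integrating factor $(1+\tau)^m e^{-\mu(1+\tau)}$ turns it into an exact derivative, and the auxiliary function $\Phi=\varphi\cdot(1+\tau)^m e^{-\mu(1+\tau)}$ is unimodal (increasing up to $\tau_*=2/(\kappa-2)$, decreasing after) with $\Phi(0)=0$ and $\Phi(\infty)=0$, forcing $\Phi>0$ on $(0,\infty)$. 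Your argument is the natural repair of the first-zero argument of Claim \ref{claim1}, which breaks down for $\kappa>2$ precisely because $2-(\kappa-2)a$ changes sign at $\tau_*$; the price is that you must invoke the decay at infinity, which is where the calibration $\nu(\kappa,\mu)=0$ of Claim \ref{claim4} enters. The paper's argument is shorter and yields the stronger conclusion that $\varphi$ itself is monotone, but leans entirely on the explicit formula. One small inaccuracy in your closing remark: the homogeneous piece $\nu e^{\mu(1+\tau)}/(1+\tau)^m$ multiplied by your integrating factor is exactly the constant $\nu e^{-\mu}\cdot e^{\mu}$ — i.e.\ $\Phi$ would tend to the finite constant $\nu$ rather than blow up; it is $\varphi$, not $\Phi$, that would blow up. This does not affect your main argument, which only uses that $\nu=0$ gives $\Phi(\infty)=0$.
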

 \begin{proof}
 Since $\varphi(0) = 0$ and $\frac{\kappa -2 }{\mu} >0$ we must have 
$$\frac{\kappa - 2 - \frac{\kappa\mu}{m+1}}{\mu^{m+2}} < 0.$$
As $\varphi'(0) > 0$ we have $\varphi(\tau) > 0$ for small $\tau > 0$. Then as $\tau$ gets bigger 
the right hand side \eqref{KRS11} gets bigger because of the signs of the coefficients of the first and
the second term.
 \end{proof}
\begin{claim}\label{claim6}
 $F'$ maps $(-\infty,\infty)$ diffeomorphically onto $(0,\infty)$. 
 \end{claim}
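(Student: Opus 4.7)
By Claim \ref{claim5}, $F''(t) = \varphi(\tau(t)) > 0$ on the domain of $t$, so $F'$ is strictly increasing and smooth, hence a diffeomorphism onto its image. It remains to identify the domain and the image. The plan is to use the integral representation \eqref{C8} and analyze the behavior of $\varphi$ at the two endpoints $\tau \to 0^+$ and $\tau \to \infty$.

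At $\tau = 0$, the conditions $\varphi(0) = 0$ and $\varphi'(0) = 2$ give $\varphi(x) \sim 2x$ as $x \to 0^+$. Therefore $\int_{\tau_0}^{\tau} dx/\varphi(x)$ behaves like $\frac{1}{2}\log \tau$ near $\tau = 0$, which diverges to $-\infty$. Hence $t \to -\infty$ as $\tau \to 0^+$.

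At $\tau \to \infty$, I read off the asymptotic behavior of $\varphi(\tau)$ from the explicit formula \eqref{KRS11}. The first term is linear in $\tau$ with coefficient $(\kappa-2)/\mu$, which is strictly positive since $\kappa - 2 > 0$ (we are in the case $k < p$) and $\mu > 0$ (by the choice made in Claim \ref{claim4}). The remaining terms on the right-hand side of \eqref{KRS11} are of the form $(1+\tau)^{j-m}$ for $0 \le j \le m$, all of which either tend to constants or to zero as $\tau \to \infty$. Thus $\varphi(\tau) \sim \frac{\kappa-2}{\mu}\tau$ as $\tau \to \infty$, so $\int^{\tau} dx/\varphi(x)$ grows like $\frac{\mu}{\kappa-2}\log\tau$ and diverges to $+\infty$. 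Hence $t \to +\infty$ as $\tau \to \infty$.

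Combining these two limits with the monotonicity of $F'$, the map $F'$ is a diffeomorphism from $(-\infty,\infty)$ onto $(0,\infty)$. The main (and essentially only) point that requires attention is verifying the sign of the leading coefficient $(\kappa-2)/\mu$ as $\tau \to \infty$, but this is immediate from the standing hypothesis $k < p$ and from Claim \ref{claim4}.
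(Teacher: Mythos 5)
Your proof is correct and follows essentially the same route as the paper: positivity of $\varphi$ from Claim \ref{claim5} gives monotonicity, the behavior $\varphi(\tau)\sim 2\tau$ near $0$ forces $t\to-\infty$, and the linear growth of \eqref{KRS11} with positive leading coefficient $(\kappa-2)/\mu$ forces $t\to+\infty$. You merely supply a bit more detail (the logarithmic divergence of the integral at both ends) than the paper's terser version.
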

 \begin{proof}
 Since $F''(t) = \varphi(\tau) > 0$ by Claim \ref{claim5}, $F'$ maps its domain diffeomorphically onto its image.
 From $\varphi(0) = 0$, $\varphi'(0) = 2$ and \eqref{C8}, we see that $t \to -\infty$ as $\tau \to 0$. On the other hand,
 since $\kappa > 2$ and $\mu > 0$, $\varphi(\tau)$ grows linearly as $\tau \to \infty$, and we see from \eqref{C8} that 
 $t \to \infty$ as $\tau \to \infty$. Hence the domain and the range of $F'$ are respectively $(-\infty,\infty)$ and $(0, \infty)$.
 \end{proof}
\begin{claim}\label{claim7}
The metric $\omega_\varphi$ defines a complete metric on $L^{k}$.
The resulting metric along the zero section has an expression which 
 can be extended to the total space of $L^k$, 
restricts to the associated unit circle bundle $S \cong \{r=1\}$ as a transversely K\"ahler-Einstein 
(Sasakian $\eta$-Einstein) metric scaled by a constant given by \eqref{C139} in the Reeb flow direction, and 
there is a Riemannian submersion from the scaled Sasakian $\eta$-Einstein metric to the
induced metric of the zero section. \end{claim}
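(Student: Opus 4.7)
My plan is to reduce the claim to Proposition \ref{C30} by verifying its hypotheses for the shrinking profile \eqref{KRS11}, exactly as in the proof of Claim \ref{claim3}. First I would collect the boundary behaviour of $\varphi$: by construction one has $\varphi(0) = 0$ and $\varphi'(0) = 2$; Claim \ref{claim5} gives strict positivity of $\varphi$ on $(0,\infty)$; and Claim \ref{claim6} shows that $F'$ is a diffeomorphism from $(-\infty,\infty)$ onto $(0,\infty)$. Thus $(t_1,t_2) = (-\infty,\infty)$ and the image of $F'$ is $(0,b) = (0,\infty)$, matching the set-up of Proposition \ref{C30}.

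Next I would check the growth of $\varphi$ at infinity. Looking at \eqref{KRS11}, the first term $(\kappa-2)(1+\tau)/\mu$ is linear in $\tau$ (and positive since $\kappa > 2$ and $\mu > 0$), while every summand of the second term carries an exponent $j - m \le 0$, so remains bounded as $\tau \to \infty$. Therefore $\varphi(\tau) = O(\tau)$, which is comfortably inside the at-most-quadratic bound required by Proposition \ref{C30}.

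These four inputs, namely $\varphi(0) = 0$, $\varphi'(0) = 2$, positivity on $(0,\infty)$, and at-most-quadratic growth at infinity, are precisely the hypotheses of Proposition \ref{C30} once an $S^1$-bundle-adapted Sasakian $\eta$-Einstein background is in place, which was arranged at the start of the section. Invoking the proposition would then directly yield a complete K\"ahler metric $\omega_\varphi$ on the total space of $L^k$, the smooth extension of the limiting expression across the zero section, the identification of the restriction to $\{r=1\}$ with the Sasakian $\eta$-Einstein metric rescaled by the constant \eqref{C139} in the Reeb flow direction, and the Riemannian submersion from this rescaled metric onto the induced metric of the zero section. The soliton equation \eqref{KRS1} with shrinking parameter $\alpha = \kappa - 2 > 0$ is built into \eqref{KRS7} and therefore holds tautologically.

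I expect the main obstacle to already be behind us, packaged into Claim \ref{claim4}: the delicate step is the existence of a \emph{positive} root $\mu$ of $\nu(\kappa,\mu) = 0$, without which formula \eqref{KRS11} would not be available and the profile could either fail to be positive or fail the quadratic-growth bound (for instance the exponential $e^{\mu(1+\tau)}/(1+\tau)^m$ piece in \eqref{KRS8} would blow up faster than quadratically for $\mu > 0$). Once that root is produced, the present claim is essentially a rereading of the argument of Proposition \ref{C30} and Claim \ref{claim3} with the shrinking formula \eqref{KRS11} substituted in place of \eqref{k4}.
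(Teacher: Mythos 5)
Your proposal matches the paper's own proof, which likewise reduces the claim to Proposition \ref{C30} by noting that $\varphi(0)=0$, $\varphi'(0)=2$, and that $\varphi$ grows linearly (hence at most quadratically) as $\tau\to\infty$, with positivity and the range of $F'$ supplied by Claims \ref{claim5} and \ref{claim6}. Your write-up simply spells out these verifications in more detail than the paper does.
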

 \begin{proof} Since $\varphi(\tau)$ grows linearly as $\tau \to \infty$, $\varphi(0) = 0$ and $\varphi'(0) = 2$
 this claim follows from the same arguments as in the proof of Proposition \ref{C30}. 
 \end{proof}
 
 \begin{proof}[Proof of Theorem \ref{main4}.]
Let $M$ be a toric Fano manifold of dimension $m$, and $L \to M$ be a negative line bundle with $K_M = L^{p}$, $p \in \bfZ^+$.
Take $k \in \bfZ^+$. By the same arguments using $D$-homothetic transformations one can show that the $U(1)$-bundle $S$ associated with $L^{k}$ 
admits a possibly irregular Sasakian $\eta$-Einstein metric which is $S^1$-bundle-adapted 
 in the sense of Definition \ref{bundle}. 
 Then by Claim \ref{claim1}, Claim \ref{claim2} and Claim \ref{claim3}, the total space of $L^{k}$ admits a complete expanding K\"ahler-Ricci soliton
 if $k > p$, and a complete steady K\"ahler-Ricci soliton if k = p,.
 Further by Claim \ref{claim4}, Claim \ref{claim5}, Claim \ref{claim6} and Claim \ref{claim7}, 
the total space of $L^{k}$ admits a complete shrinking K\"ahler-Ricci soliton if $k<p$.
As stated in Claim \ref{claim3} and Claim \ref{claim7}, the resulting metric in each case 
along the zero section has an expression which 
 can be extended to the total space of $L^k$, 
restricts to the associated unit circle bundle $S \cong \{r=1\}$ as a transversely K\"ahler-Einstein 
(Sasakian $\eta$-Einstein) metric scaled by a constant given by \eqref{C139} in the Reeb flow direction, and 
there is a Riemannian submersion from the scaled Sasakian $\eta$-Einstein metric to the
induced metric of the zero section.
 \end{proof}

\end{document}